\documentclass[a4paper,12pt]{amsart}
\usepackage{amsmath,amsthm,amssymb}
\usepackage[arrow,matrix]{xy}
\usepackage{hyperref}

\theoremstyle{ams}
\newtheorem{theorem}{Theorem}[section]
\newtheorem{proposition}[theorem]{Proposition}
\newtheorem{lemma}[theorem]{Lemma}
\newtheorem{corollary}[theorem]{Corollary}
\numberwithin{equation}{section}
\theoremstyle{definition}
\newtheorem{definition}[theorem]{Definition}

\newcommand{\C}{\mathbb{C}}
\newcommand{\Q}{\mathbb{Q}}

\newcommand{\Z}{\mathbb{Z}}
\newcommand{\CP}{\mathbb{C}P}
\newcommand{\mathscr}{}

\newcommand{\ma}{\alpha}

\DeclareMathOperator{\rank}{rank}

\def\c#1#2{c_{#2#1}} 
\def\d#1#2{d_{#2#1}} 
\def\a#1#2{a_{#2#1}} 

\begin{document}
\title {Properties of Bott manifolds and Cohomological Rigidity}

\author {Suyoung Choi}
\address{Department of Mathematics, Osaka City University, Sugimoto, Sumiyoshi-ku, Osaka 558-8585, Japan}
\email{choi@sci.osaka-cu.ac.jp}
\urladdr{http://math01.sci.osaka-cu.ac.jp/~choi}

\author{Dong Youp Suh}
\address{Department of Mathematical Sciences, KAIST, 335 Gwahangno, Yu-sung Gu, Daejeon 305-701, Korea}
\email{dysuh@math.kaist.ac.kr}

\thanks{The first author is supported by the Japanese Society for the Promotion of Sciences (JSPS grant no. P09023).}
\thanks{The second author is partially supported by Basic Science Research Program through the national Research Foundation of Korea(NRF) founded by the Ministry of Education, Science and Technology (2009-0063179).}

\subjclass[2000]{Primary 57S25; Secondary 22F30}

\keywords{Toric manifold, Quasitoric manifold, Bott tower, twist number, cohomological complexity, cohomological rigidity, one-twisted Bott tower}

\date{\today}
\maketitle
\begin{abstract}
The cohomological rigidity problem for toric manifolds asks whether the cohomology ring of a toric manifold determines the topological type of the manifold. In this paper, we consider the problem with the class of one-twist Bott manifolds to get an affirmative answer to the problem. We also generalize the result to quasitoric manifolds. In doing so, we show that the twist number of a Bott manifold is well-defined and is equal to the cohomological complexity of the cohomology ring of the manifold. We also show that any cohomology Bott manifold is homeomorphic to a Bott manifold. All these results are also generalized to the case with $\Z_{(2)}$-coefficients, where $\Z_{(2)}$ is the localized ring at $2$.
\end{abstract}

\tableofcontents

\section{Introduction}
A class $\mathcal M$ of closed manifolds is said to be {\em cohomologically rigid} if any two elements $M,N\in \mathcal M$ are homeomorphic whenever their cohomology rings are isomorphic.
One of the intersting problems in toric topology is to determine whether the class of toric (or quasitoric) manifolds is cohomologically rigid. A quasitoric manifold is a topological
analogue of a toric manifold, which was first introduced by Davis and Januszkiewicz in \cite{DJ}, see also \cite{BP}.

Since the class of toric or quasitoric manifolds is too large to handle it is reasonable to restrict our attention to a smaller but an interesting subclass of manifolds. Namely, we would like to restrict our focus on Bott manifolds or cohomology Bott manifolds.

A \emph{(complex) Bott tower} $\{B_j \mid j=0,\ldots, n\}$ of height $n$ (or \emph{$n$-stage Bott tower}) is a sequence
$$
B_n\stackrel{\pi_n}\longrightarrow B_{n-1} \stackrel{\pi_{n-1}}\longrightarrow
\dots \stackrel{\pi_2}\longrightarrow B_1 \stackrel{\pi_1}\longrightarrow
B_0=\{\text{a point}\},
$$ of manifolds  $B_j = P( \underline{\C} \oplus \xi_{j-1} )$
where $\xi_{j-1}$ is a complex line bundle over $B_{j-1}$ for each
$j=1,\ldots,n$. In this case we call $B_j$ the {\em $j$-th stage Bott manifold} of the Bott tower.
A smooth manifold $M$ diffeomorphic to the top stage $B_n$ of a Bott tower is also called a Bott manifold, and in this case
$\{B_j\mid j=0,\ldots n\}$ is callded a {\em Bott tower structure } of $M$.

A Bott tower was first introduced
by Bott and Samelson in \cite{BS}, and later named as Bott tower in \cite{GK}.
Bott manifolds are known to have algebraic torus actions, hence they constitute
an important family of toric manifolds. A \emph{cohomology Bott manifold} is a quasitoric manifold whose cohomology ring is isomorphic to that of a Bott manifold.

The question we are interested in  here is whether the class of (cohomology) Bott manifolds is cohomologically rigid.
So far, there is no counter example to the question, but some positive results.
Masuda and Panov considered the problem and showed that any $n$-stage Bott manifold is diffeomorphic to
the trivial Bott manifold $(\mathbb CP^1)^n$ if its cohomology ring is isomorphic to that of $(\mathbb CP^1)^n$.

The notion of Bott tower is generalized to a {\em generalized Bott tower} in \cite{ch-ma-su08} which is an iterated complex projective space bundles obtained from projectivization of sum of line bundles over a complex projective
space, and the result in \cite{MP} is extended to generalized Bott manifolds in \cite{CMS}.
Furthermore any three-stage Bott manifolds and $2$-stage generalized Bott manifolds  are shown to be cohomologically rigid there.

Davis and Januszkiewicz also introduced a real analogue of a quasitoric manifold called a {\em small cover} in \cite{DJ}.
But for small covers the corresponding cohomologies are with $\mathbb Z_2$-coefficients.
Moreover we can define a {\em real Bott tower} to be an iterated $\mathbb RP^1$ bundles over $\mathbb RP^1$, and a
{\em generalized real Bott tower} is defined similarly.
So one might ask a similar cohomological rigidity question asking whether  two real Bott manifolds are homeomorphic if their
mod $2$ cohomology rings are isomorphic.
This is shown to be true recently by Kamishima and Masuda \cite{KM}, \cite{Ma1}.
However the same question for generalized real Bott manifolds is not true, see \cite{Ma2}.

However not much is known for the cohomological rigidity of  Bott  manifolds whose
cohomology rings are not isomorphic to that of product of $\mathbb CP^1$.
In this article we consider one-twist Bott manifolds, i.e., only one stage has nontrivial fibration
in its Bott tower structure. We prove in Theorem~\ref{theorem:rigidity of one-twist Bott tower}
that one-twist Bott towers are cohomologically rigid.
Moreover this result is extended to quasitoric manifolds whose $\mathbb Z$-cohomology rings are isomorphic to those of one-twist Bott towers in Theorem~\ref{theorem:final theorem}.
Theorem~\ref{theorem:final theorem} is an immediate consequence of Theorem~\ref{theorem:rigidity of one-twist Bott tower}
together with two properties related with Bott towers. They are Theorem~\ref{theorem:twist number = complexity} and
Theorem~\ref{theorem:BQ-algebra and Bott tower}.

A Bott manifold $M$ may have two Bott tower structures $\{B_j \mid j=0,\ldots, n\}$  and
$\{B_j' \mid j=0,\ldots, n\}$. The question we are interested in here is whether the twist number (i.e., the number of
nontrivial fibrations) of the two Bott tower structures are equal? If so, the twist number of a Bott manifold is
well-defined.

On the other hand the cohomology ring of an $n$-stage Bott manifold $M$ is a truncated polynomial ring
\begin{equation}\label{eqn:cohomology}
    H^\ast(M) \cong \Z[x_1, \ldots, x_n]/I,
\end{equation} where $I=<x_j(x_j - f_j) : j=1, \ldots, n>$ and $f_j = \sum_{i=1}^{j-1}\c{j}{i}x_i$ with $\deg x_i=2$.
If the fibration of the $j$-th stage of a Bott tower structure on $M$ is trivial,
then we may assume that $f_j=0$. The number of nonzero $f_j$'s may depend on  the choices of both
generators of the cohomology ring $H^\ast(M)$ and Bott tower structures of $M$.
The {\em cohomological complexity of $M$} is the minimal number of nonzero $f_j$'s
among all possible such choices.
It is obvious that cohomological complexity of $H^\ast(M)$  is less than or equal to the twist number of
any Bott tower structure of $M$.
In Theorem~\ref{theorem:twist number = complexity}, we show that the twist number of any Bott tower structure of
$M$ is equal to the cohomological complexity of $H^\ast(M)$. In particular the twist number of a Bott manifold is
well-defined, namely, it does not depend on the choice of Bott manifold structures of a Bott manifold.

A $BQ$-algebra of rank $n$ is defined in \cite{MP}. In particular, the cohomology ring of any $n$-stage Bott manifold is a $BQ$-algebra of rank $n$ over $\mathbb Z$.
The converse of this is proved in Theorem~\ref{theorem:BQ-algebra and Bott tower}.

It is proved in \cite{CMS} that the class of three-stage Bott manifolds are cohomologically rigid.
An immediate consequence of this result together with Theorem~\ref{theorem:BQ-algebra and Bott tower}
is Theorem~\ref{theorem:3 stage Bott manifold}, which says that the class of $6$-dimensional quasitoric manifolds
whose cohomolgies are BQ-algebras over $\mathbb Z$ is cohomologically rigid.

So far, all the cohomological results are over $\Z$ coefficients. But by careful observation of the proofs we can see that the same conclusion can be derived with the $2$-localized $\Z_{(2)}$-coefficients. This is treated in Section~\ref{sectoin:BQ-algebra over Z_2}.

\section{A sum of two line bundles over Bott manifolds}

Let $\{B_{j}=P( \underline{\C} \oplus \xi_{j-1} ) \mid 0 \leq j \leq n \}$ be a complex Bott tower of height $n$.
By the standard results on the cohomology of projectivised bundles, we can see that the
cohomology of $B_j$ is a free module over $H^\ast(B_{j-1})$ on generators $1$ and $x_j$ of dimension
$0$ and $2$ respectively. The ring structure of $H^\ast(B_j)$ is determined by a single relation
$$x_j^2=c_1(\xi_{j-1})x_j$$
where $x_j$ is the first Chern class of the line bundle $\gamma_j$ which is the pull-back bundle of the tautological line bundle of $P(\mathbb C\oplus \xi_{j-1})=B_j$ via the
projection $B_n\to B_j$.
Since $c_1(\xi_{j-1})\in H^2(B_{j-1})$, we can write
$$ f_j:= c_1 (\xi_{j-1}) = \sum_{i=1}^{j-1}\c{j}{i}x_i.$$
Since complex line bundles are distinguished by their first Chern classes, Bott manifold $B_n$ is determined by
the above list of integers ($\c{j}{i} : 1 \leq i < j \leq
n$).

It is convenient to organize the integers $\c{j}{i}$ into an $n
\times n$ upper triangular matrix,
\begin{equation} \label{eqn:lambda}
\Lambda = \left(%
\begin{array}{cccc}
  0 & \c{2}{1} & \cdots & \c{n}{1} \\
    & 0 & \cdots & \vdots \\
    &  & \ddots & \c{n}{n-1} \\
    &  &  & 0 \\
\end{array}
\right).
\end{equation}
We call it the  \emph{associated matrix} of the Bott tower.

One of the basic questions in vector bundle theory is to determine when two bundles with
equal characteristic classes are isomorphic.
In particular, we would like to know whether the following question is true.
Let $\xi$ and $\eta$ be sums of $k$ complex line bundles over a generalized Bott manifold $B$.
{\em Are two bundles $\xi$ and $\eta$ isomorphic if their total Chern classes are equal?}
The answer is true when $B$ is a generalized Bott tower and $\eta$ is the trivial bundle, see  \cite{CMS}.
In this section we provide two more affirmative answers to the question.
They are  Proposition~\ref{proposition:trivial bundle}
and Proposition~\ref{proposition:isomorphism of sum of two line bundle}. We first need the following lemma. We sometimes confuse Bott tower with its last stage Bott manifold when they are clear from the context.

\begin{lemma}\label{lemma:2-1}
    Let $B_n$ and $B_n'$ be two $n$ stage Bott towers. If the  associated  matrices to them are
    $$
    \left(
      \begin{array}{ccccc}
        0    & \ast   & \ast & b_1 & a_1 \\
             & \ddots & \ast & \vdots & \vdots \\
             &   & 0 & b_{n-2} & a_{n-2} \\
            &   &   & 0 & 0 \\
         &  &  & & 0 \\
      \end{array}
    \right) {\textrm and }     \left(
      \begin{array}{ccccc}
        0    & \ast   & \ast & a_1 & b_1 \\
             & \ddots & \ast & \vdots & \vdots \\
             &   & 0 & a_{n-2} & b_{n-2} \\
            &   &   & 0 & 0 \\
         &  &  & & 0 \\
      \end{array}
    \right)
    $$respectively, then $B_n$ and $B_n'$ are diffeomorphic.
\end{lemma}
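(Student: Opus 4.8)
The plan is to exploit the single structural feature that both associated matrices share: the $(n-1,n)$-entry vanishes. This forces the line bundles used at the top two stages to be pulled back from the common base $B_{n-2}$, and once that is recognised the two towers become nothing but the two orders in which one forms a fibre product over $B_{n-2}$, an operation that is manifestly symmetric.

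First I would fix notation. Since the two matrices agree in their first $n-2$ columns, the truncated towers $B_0\to\cdots\to B_{n-2}$ literally coincide; write $B:=B_{n-2}$ for this common base, with distinguished generators $x_1,\dots,x_{n-2}\in H^2(B)$. Let $\alpha$ and $\beta$ be the complex line bundles over $B$ with $c_1(\alpha)=\sum_{i=1}^{n-2}a_i x_i$ and $c_1(\beta)=\sum_{i=1}^{n-2}b_i x_i$, and set $E_\alpha:=\underline{\C}\oplus\alpha$, $E_\beta:=\underline{\C}\oplus\beta$. Reading the last two columns off $\Lambda$, the tower for $B_n$ has $f_{n-1}=c_1(\beta)$ and $f_n=\sum_{i=1}^{n-2}a_i x_i$ (with no $x_{n-1}$ term, precisely because the $(n-1,n)$-entry is $0$), while the tower for $B_n'$ has these two roles interchanged.

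Next I would simplify each top stage. For $B_n$ we have $B_{n-1}=P(E_\beta)$; let $p\colon B_{n-1}\to B$ be the projection, under which each $x_i$ pulls back to the corresponding class in $H^2(B_{n-1})$. Hence the stage-$n$ bundle $\xi_{n-1}$ satisfies $c_1(\xi_{n-1})=\sum a_i x_i=p^\ast c_1(\alpha)$, and since complex line bundles over a closed manifold are classified by their first Chern class this gives $\xi_{n-1}\cong p^\ast\alpha$, so that $\underline{\C}\oplus\xi_{n-1}\cong p^\ast E_\alpha$. Invoking the standard identity $P(p^\ast E)\cong Y\times_B P(E)$ for a bundle $E\to B$ pulled back along $p\colon Y\to B$, I obtain
$$B_n=P(p^\ast E_\alpha)\cong P(E_\beta)\times_B P(E_\alpha).$$
Running the identical argument for $B_n'$ yields $B_n'\cong P(E_\alpha)\times_B P(E_\beta)$.

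Finally, the fibre product over $B$ is symmetric in its two factors, so the interchange $(y,z)\mapsto(z,y)$ is a smooth diffeomorphism $P(E_\beta)\times_B P(E_\alpha)\cong P(E_\alpha)\times_B P(E_\beta)$, that is, $B_n\cong B_n'$. The only genuinely delicate point is the identification $\xi_{n-1}\cong p^\ast\alpha$: it is essential that $f_n$ involves none of $x_{n-1}$, so that the top bundle actually descends to $B$, and this is exactly what the vanishing $(n-1,n)$-entry guarantees. Everything else is formal, and since all the maps involved are smooth fibre-bundle maps the resulting map is an honest diffeomorphism rather than merely a homeomorphism.
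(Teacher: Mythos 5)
Your proof is correct and follows essentially the same route as the paper: identify the common base $B=B_{n-2}$, observe that the vanishing $(n-1,n)$-entries let both top-stage bundles descend to $B$, and realize $B_n$ and $B_n'$ as the fiber product of $P(\underline{\C}\oplus\gamma^\alpha)$ and $P(\underline{\C}\oplus\gamma^\beta)$ over $B$ in the two orders, which swap-symmetry identifies. The paper phrases this via the pullback bundles $\pi_\beta^\ast(\eta_\alpha)\cong\pi_\alpha^\ast(\eta_\beta)$, which is the same fiber-product argument you give.
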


Note that this lemma can be seen by the fact that $B_n$ and $B_n'$ are diffeomorphic if two associated matrices are conjugate by a permuatition matrix, see \cite{ch-ma-su08} or \cite{MP}. However, here we give a direct proof of the lemma.

\begin{proof}
Let $B$ be the $(n-2)$-stage Bott manifold with the associated  $(n-2)\times (n-2)$ matrix
$$
    \left(
      \begin{array}{ccc}
         0 & \ast & \ast  \\
         & \ddots & \ast \\
         &  & 0 \\
      \end{array}
    \right).
$$ Then $B= B_{n-2} \rightarrow B_{n-3} \rightarrow \cdots \rightarrow B_1 \rightarrow B_0$, where $B_{j} = P(\C \oplus \xi_{j-1}) \rightarrow B_{j-1}$.

Let $\gamma_j$ be the pull-back of the tautological line bundle of $P(\C \oplus \xi_{j-1})= B_j$ via the projection $B=B_{n-2}\to B_j$, and let $c_1(\gamma_j)=x_j$ for $j \le n-2$. Let $\alpha=\sum_{i=1}^{n-2}a_ix_i$ and $\beta=\sum_{i=1}^{n-2}b_i x_i \in H^2(B)$. Define two complex line bundles over $B$
$$\gamma^\alpha=\bigotimes_{i=1}^{n-2}\gamma_i^{a_i} \rightarrow B, \text{ and } \gamma^\beta=\bigotimes_{i=1}^{n-2}\gamma_i^{b_i} \rightarrow B.$$
Let $\pi_\alpha : P(\C \oplus \gamma^\alpha) \rightarrow B$ be the projection of the $\mathbb CP^1$-bundle over $B$
and denote  this fiber bundle by $\eta_{\alpha}$. Similarly   $\pi_\beta : P(\C \oplus \gamma^\beta) \rightarrow B$  and $\eta_{\beta}$ is defined. Then
\[
\xymatrix{
    B_n' \cong \pi_\alpha^\ast(\eta_{\beta}) \ar[d]& & \ar[d] \pi_\beta^\ast(\eta_{\alpha}) \cong B_n \\
    B_{n-1}' = P(\C \oplus \gamma^\alpha) \ar[dr]_{
     \pi_\alpha} & & \ar[dl]^{\pi_\beta
    }
    P(\C \oplus \gamma^\beta) =B_{n-1} \\
    & B = B_{n-2} &
},\] where $\pi_\beta^\ast(\eta_{\alpha})=\{ (x,y) \in P(\C \oplus \gamma^\beta) \times P(\C \oplus \gamma^\alpha) | \pi_\beta(x) = \pi_\alpha(y)\}$ and $\pi_\alpha^\ast(\eta_{\beta})=\{ (a,b) \in P(\C \oplus \gamma^\alpha) \times P(\C \oplus \gamma^\beta) | \pi_\alpha(a) = \pi_\beta(b)\}.$
Therefore $\pi_\beta^\ast(\eta_{\alpha}) \cong \pi_\alpha^\ast(\eta_{\beta})$.
\end{proof}

\begin{corollary}\label{corollary:last twist}
If a Bott manifold has a one-twist Bott tower structure, then it has another Bott tower structure
whose last stage is nontrivial and all other stages are trivial.
\end{corollary}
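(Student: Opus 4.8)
The plan is to relocate the unique nontrivial fibration to the top of the tower by pushing it upward one stage at a time, each move being an instance of Lemma~\ref{lemma:2-1}. Suppose the given one-twist structure on the Bott manifold $M$ has its nontrivial fibration at stage $k$, so that in the associated matrix only the $k$-th column is nonzero, with entries $c_{ik}$ in rows $i=1,\ldots,k-1$. If $k=n$ there is nothing to prove, so assume $k<n$. Because every stage above $k$ is trivial, the truncation gives $B_{k-1}\cong(\CP^1)^{k-1}$ and $M=B_n\cong B_{k+1}\times(\CP^1)^{n-k-1}$, where $B_{k+1}\to B_k\to\cdots\to B_0$ is the sub-tower of height $k+1$ whose only nontrivial fibration is stage $k$.

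The key step is to swap stages $k$ and $k+1$ of this sub-tower. Since stage $k+1$ is trivial we have $c_{k,k+1}=0$, so the last two columns of the associated matrix of the height-$(k+1)$ sub-tower are exactly in the form required by Lemma~\ref{lemma:2-1}: one carries the twist data $(c_{1k},\ldots,c_{k-1,k},0,0)^{T}$ and the other is zero. Applying Lemma~\ref{lemma:2-1} produces a diffeomorphic height-$(k+1)$ Bott tower $B_{k+1}'$ in which stage $k$ is trivial and stage $k+1$ carries the twist $\sum_{i=1}^{k-1}c_{ik}x_i$. Taking the product with $(\CP^1)^{n-k-1}$ and adjoining the trivial stages $k+2,\ldots,n$ back on top, we obtain $M\cong B_{k+1}'\times(\CP^1)^{n-k-1}$, which is again a one-twist Bott tower structure on $M$, now with its nontrivial fibration at stage $k+1$. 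Iterating this move $n-k$ times pushes the twist to stage $n$, yielding the desired structure.

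The point that needs care is the legitimacy of applying Lemma~\ref{lemma:2-1}, stated for the last two stages, to the interior stages $k$ and $k+1$, and then reinstating the trivial stages above. This is exactly where triviality of the upper fibrations is used: because stages $k+2,\ldots,n$ are products, a diffeomorphism of the $(k+1)$-st stage Bott manifold extends to the whole tower by taking the product with $(\CP^1)^{n-k-1}$, via the identity $P(\underline{\C}\oplus p^\ast\gamma)\cong P(\underline{\C}\oplus\gamma)\times Y$ for a pullback along a projection $p\colon X\times Y\to X$ that already underlies the proof of Lemma~\ref{lemma:2-1}. Equivalently, each swap is conjugation of the associated matrix by the adjacent transposition $(k\ \ k+1)$, which preserves the diffeomorphism type and, thanks to the one-twist hypothesis, keeps the matrix strictly upper triangular; so no genuine obstruction arises and the reordering goes through.
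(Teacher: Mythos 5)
Your proof is correct and follows the same strategy as the paper, which simply invokes successive applications of Lemma~\ref{lemma:2-1} to move the single nontrivial fibration to the top stage (equivalently, to push the trivial fibrations down). You have merely made explicit the point the paper leaves implicit --- that a swap of interior adjacent stages is legitimate because the trivial upper stages split off as a product $(\CP^1)^{n-k-1}$, or equivalently because the swap is conjugation of the associated matrix by an adjacent transposition that preserves strict upper-triangularity under the one-twist hypothesis.
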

\begin{proof}
By successive applications of Lemma \ref{lemma:2-1}, we can push the trivial fibration down to lower levels.
\end{proof}

\begin{corollary}\label{corollary:columns with zeros}
If $B_n$ is a Bott tower with the associated matrix
    $$\Lambda=
\left(%
\begin{array}{cccc}
  0 & \c{2}{1} & \cdots  & \c{n}{1}   \\
    & 0 &  & \vdots \\
     &  & \ddots & \c{n}{n-1}\\
  &   &   & 0
\end{array}
\right)
$$
such that $\c{k+1}{k} = \c{k+2}{k} = \cdots = \c{n}{k} = 0$. Then $B_n$ is diffeomorphic to
a Bott tower with the associated matrix
$$\Lambda'=
    \left(%
\begin{array}{ccccccccc}
0 &\c{2}{1}& \cdots & \c{k-1}{1}& \c{k+1}{1}& \c{k+2}{1}& \cdots& \c{n}{1}& \c{k}{1} \\ %
  & 0 & \cdots & \c{k-1}{2}&\c{k+1}{2}&\c{k+2}{2}&\cdots&\c{n}{2}&\c{k}{2} \\ %
  &   & \ddots & \vdots& \vdots& \vdots & \vdots&\vdots &\vdots\\
  &   & & 0 &\c{k+1}{k-1}&\c{k+2}{k-1}&\cdots&\c{n}{k-1}&\c{k}{k-1} \\%
  &   & &  & 0 & \c{k+2}{k+1}&\cdots&\c{n}{k+1}& 0\\
  &   & &  & & 0 & \vdots& \vdots& \vdots\\
  &   & &  &  & &\ddots & \c{n}{n-1}& 0\\
  &   & &  & & & &0&0\\%
  &   & &  & & & &&0
\end{array}%
\right).
$$
\end{corollary}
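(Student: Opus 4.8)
The plan is to realize the passage from $\Lambda$ to $\Lambda'$ as a sequence of interchanges of adjacent stages, each an instance of Lemma~\ref{lemma:2-1}. Observe first that $\Lambda'$ is exactly what one obtains from $\Lambda$ by the cyclic reordering of indices $(1,\ldots,k-1,k+1,\ldots,n,k)$; that is, by the permutation that pushes the index $k$ rightward past $k+1,\ldots,n$ into the last slot while fixing $1,\ldots,k-1$. This permutation factors as the product of the $n-k$ adjacent transpositions $\tau_k,\tau_{k+1},\ldots,\tau_{n-1}$ performed in this order, where $\tau_p$ interchanges the two stages currently occupying positions $p$ and $p+1$. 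First I would carry out these interchanges one at a time and verify that each is a legitimate application of the lemma.

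For a single interchange $\tau_p$ I would restrict attention to the three consecutive stages $B_{p+1}\to B_p\to B_{p-1}$ and treat $B_{p-1}$ as the base $B$ in Lemma~\ref{lemma:2-1}. That lemma applies precisely when the two line bundles defining stages $p$ and $p+1$ are both pulled back from $B_{p-1}$, equivalently when the defining class of stage $p+1$ does not involve the generator introduced at stage $p$, i.e.\ when the matrix entry in row $p$, column $p+1$ vanishes. In our sequence the stage being moved rightward is always the original $k$-th stage, which occupies the left-hand slot of every interchange, so the entry that must vanish at the $j$-th step is exactly the coefficient of $x_k$ in the defining class of the original $(k+j)$-th stage, namely $\c{k+j}{k}$. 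The hypothesis $\c{k+1}{k}=\c{k+2}{k}=\cdots=\c{n}{k}=0$ makes every one of these entries zero, so each interchange is a valid application of Lemma~\ref{lemma:2-1}; moreover it is the vanishing of this same row that keeps every intermediate matrix upper triangular, since the only entry an interchange could push below the diagonal is the one it is required to annihilate. Tracking the effect of the interchanges on the remaining entries then reproduces $\Lambda'$ verbatim.

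The hard part will be promoting the conclusion of Lemma~\ref{lemma:2-1}, which only asserts a diffeomorphism of the two-stage piece over the base, to a diffeomorphism of the whole tower $B_n$. To handle this I would record that the diffeomorphism $\phi$ furnished by the lemma covers the identity on $B_{p-1}$ and acts on cohomology by interchanging the two generators $x_p$ and $x_{p+1}$ while fixing all remaining $x_i$. Hence for every higher stage $q>p+1$ the map $\phi^\ast$ carries the stage-$q$ class of the reordered tower (the one with the coefficients of $x_p$ and $x_{p+1}$ interchanged) back to the original class $f_q$, so $\phi^\ast$ identifies the bundle $\gamma^{f_q}$ with the corresponding bundle on the reordered tower; since complex line bundles are classified by $c_1$, this lets me lift $\phi$ across one further projectivization, and iterating upward extends it to a diffeomorphism of $B_n$. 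This lifting is the genuine obstacle; once it is secured the corollary follows, in agreement with the permutation-conjugation principle noted in the remark following Lemma~\ref{lemma:2-1} (the hypothesis on row $k$ being precisely what keeps the conjugated matrix upper triangular).
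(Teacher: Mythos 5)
Your proof is correct and follows essentially the same route as the paper's (which is only sketched through a $4\times 4$ example): factor the move of stage $k$ to the top position into $n-k$ adjacent interchanges, each a legitimate application of Lemma~\ref{lemma:2-1} because the relevant entry $\c{k+j}{k}$ vanishes, and track the induced relabelling of the generators $x_i$. Your explicit lifting of each interchange from the truncated sub-tower to the full tower, via the classification of line bundles by $c_1$, makes precise a step the paper's sketch leaves implicit.
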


Note that $\Lambda'$ is obtained from $\Lambda$ by interchanging the $k$-th and the $n$-th rows and the $k$-th and the $n$-th columns. So $\Lambda'$ is conjugate to $\Lambda$, and again, by \cite{ch-ma-su08} and \cite{MP}, we can see that their corresponding Bott manifolds are diffeomorphic. However, we give an elementary and direct indication of proof here for reader's convenience.

\begin{proof}[Sketch of Proof]
This is an easy consequence of Lemma~\ref{lemma:2-1}. The only thing to consider is
that when exchanging the columns we need to take care of the effect of the indices of $x_j$'s.
Here we only give an idea of the proof with an example.
The proof of the general case is quite similar.
Here we consider $B_4$ with the following associated matrix
$$A=
\left(
\begin{array}{c c c c}
0 & a & b & c \\
 & 0 & 0 & 0 \\
 &  & 0 & d \\
 &  &  & 0
\end{array}
\right).
$$
Then $H^\ast(B_4)\cong \mathbb Z[x_1, x_2, x_3, x_4]/I$ where $I$ is the ideal generated by
$$ x_1^2,\  x_2(x_2-ax_1),\  x_3(x_3 - bx_1), \ x_4(x_4 - dx_3 - cx_1).$$
We apply Lemma~\ref{lemma:2-1} to $B_3$ whose associated matrix is
$$B=
\left(
\begin{array}{c c c}
0 & a & b \\
 & 0 & 0 \\
 &  & 0 \\
\end{array}
\right),
$$
which results exchanging the second and third columns of $B$. The effect of the above
procedure also exchanges the second and third stages of the Bott tower of $B_4$,
and as a result the variable $x_2$ and $x_3$ will be exchanged. Namely, the variables
$x_1, x_2, x_3, x_4$ will be changed to $x_1', x_3', x_2', x_4'$.
Therefore, with the changed variables $x_1', x_2', x_3', x_4'$,
$B_4$ is diffeomorphic to $B_4'$ with the assocoated matrix
$$A'=
\left(
\begin{array}{c c c c}
0 & b & a & c \\
 & 0 & 0 & d \\
 &   & 0 & 0 \\
 &  &  & 0
\end{array}
\right).
$$
We now apply Lemma~\ref{lemma:2-1} to $A'$.
This means that we are exchanging the third and fourth stages of the Bott tower of $B_4'$
to get $B_4 ^{\prime\prime}$ with the associated matrix
$$A^{\prime\prime}=
\left(
\begin{array}{c c c c}
0 & b & c & a \\
 & 0 & d & 0 \\
 &  & 0 & 0 \\
 &  &  & 0
\end{array}
\right).
$$
with the variables $x_1', x_2', x_3', x_4'$ changed to $x_1^{\prime\prime},x_2^{\prime\prime},x_4^{\prime\prime},x_3^{\prime\prime}$.
This is the desired result.
\end{proof}

\begin{proposition}\label{proposition:trivial bundle}
A sum of two line bundles over a Bott manifold is trivial if and only if the total Chern class is trivial.
\end{proposition}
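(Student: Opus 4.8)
The forward implication is immediate, since a trivial bundle has trivial total Chern class. For the converse, suppose $c(L_1\oplus L_2)=(1+c_1(L_1))(1+c_1(L_2))=1$. Comparing degrees, the degree-$2$ part gives $c_1(L_2)=-c_1(L_1)$, and since complex line bundles over any space are classified by their first Chern class, this forces $L_2\cong L_1^{-1}$; the degree-$4$ part gives $c_1(L_1)c_1(L_2)=-c_1(L_1)^2=0$. Writing $L:=L_1$ and $a:=c_1(L)\in H^2(B)$, the task is therefore to show that $E:=L\oplus L^{-1}$ is trivial whenever $a^2=0$.

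The plan is to first reduce triviality of $E$ to the existence of a nowhere-vanishing section. Indeed, a nowhere-zero section spans a trivial sub-line-bundle $\underline{\C}\hookrightarrow E$, and the quotient is a line bundle $M$ with $c_1(M)=c_1(E)=0$, hence $M\cong\underline{\C}$; as short exact sequences of smooth vector bundles split, $E\cong\underline{\C}\oplus\underline{\C}$. It thus suffices to produce a nowhere-zero section of $E$, which I would do by induction on the height $n$ of a Bott tower structure on $B=B_n$. For $n=1$ we have $B=\CP^1$ and $E|_{\CP^1}\cong\mathcal{O}(k)\oplus\mathcal{O}(-k)$ for some $k$, which has $c_1=0$ over $S^2$ and is therefore trivial.

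For the inductive step, write $\pi\colon B_n\to B_{n-1}$ for the top projection and $a=kx_n+\pi^\ast b$ with $k\in\Z$ and $b\in H^2(B_{n-1})$. Since $H^\ast(B_n)$ is free over $H^\ast(B_{n-1})$ on $1,x_n$ and $x_n^2=\pi^\ast f_n\,x_n$, the hypothesis $a^2=0$ splits into $b^2=0$ and $k(kf_n+2b)=0$. If $k=0$ then $L=\pi^\ast L'$ for a line bundle $L'$ on $B_{n-1}$ with $c_1(L')^2=b^2=0$, and a nowhere-zero section obtained by induction on $B_{n-1}$ simply pulls back. If $k\neq0$ then $2b=-kf_n$; here $E$ restricts to the trivial bundle on each $\CP^1$ fibre, and along the two canonical sections $\sigma_0,\sigma_\infty\colon B_{n-1}\to B_n$ (corresponding to the summands $\underline{\C}$ and $\xi_{n-1}$) one computes $\sigma_0^\ast a=b$ and $\sigma_\infty^\ast a=kf_n+b=-b$, both of square zero; hence by the inductive hypothesis $\sigma_0^\ast E$ and $\sigma_\infty^\ast E$ are trivial over $B_{n-1}$.

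It then remains to upgrade fibrewise and sectionwise triviality to triviality over all of $B_n$. I would cover $B_n$ by the complements $U_0,U_\infty$ of the two sections, each of which deformation retracts onto $B_{n-1}$ and over which $E$ is therefore trivial, and then analyse the clutching function $g\colon U_0\cap U_\infty\to U(2)$, triviality of $E$ being equivalent to $g$ extending over $U_0\simeq B_{n-1}$. The fibrewise triviality makes the restriction of $g$ to each fibre-circle null-homotopic, since the winding numbers $+k$ and $-k$ cancel in $\pi_1(U(2))$, so that the only surviving obstructions live in $H^\ast(B_{n-1};\pi_\ast(U(2)))$. This clutching step is the main difficulty: naive obstruction theory leaves potential classes in even-degree groups such as $H^6(B;\Z_2)$, and the point is to kill them using both the cancellation of fibre windings and the fact that the cohomology of a Bott manifold is concentrated in even degrees and is torsion-free. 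Finally, I note that the statement is the two-summand case of the result of \cite{CMS} that a sum of line bundles over a generalized Bott manifold is trivial exactly when its total Chern class is trivial, so that theorem may alternatively be invoked directly.
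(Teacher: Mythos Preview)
Your initial reductions are correct and match the paper: from $c(L_1\oplus L_2)=1$ you correctly extract $L_2\cong L_1^{-1}$ and $a^2=0$ for $a=c_1(L_1)$, and the case $k=0$ of your induction (pullback from $B_{n-1}$) is exactly the paper's Case~1. The gap is in the case $k\neq 0$.

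Your clutching/obstruction sketch is not a proof, and you say as much. The difficulty is genuine: $\pi_4(U(2))=\pi_5(U(2))=\Z_2$, so after the primary obstructions $c_1,c_2$ vanish there remain secondary obstructions in groups such as $H^6(B_n;\Z_2)$, which is typically nonzero for a Bott manifold. Even-degree concentration and torsion-freeness of $H^\ast(B_n;\Z)$ do not help, since with $\Z_2$ coefficients the even-degree cohomology is still nonzero. Completing this line would require identifying the relevant Postnikov $k$-invariants of $BU(2)$ and checking that they vanish on the classifying map of $L\oplus L^{-1}$ using $a^2=0$; nothing in your sketch does that, and the ingredients you list (cancellation of fibre windings, even/torsion-free cohomology) are not enough on their own to kill an obstruction in $H^6(B_n;\Z_2)$.

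The paper's argument is entirely different and avoids obstruction theory. Writing $a=\sum_j a_jx_j$, the condition $a^2=0$ unpacks to the numerical relations $a_j^2c_{ij}=-2a_ja_i$ for all $i<j$, and the proof is a three-case analysis on the $a_j$. If $a_n=0$, pull back and use induction. If some $a_k=0$ with $k<n$ but $a_n\neq 0$, the relations force $c_{k,k+1}=\cdots=c_{k,n}=0$, so a diffeomorphism of Bott towers permuting stages (Corollary~\ref{corollary:columns with zeros}) moves the vanishing coefficient to the top and reduces to the first case. If all $a_j\neq 0$, the relations force every $c_{ij}\neq 0$ and pin down enough of the bottom of the associated matrix that an explicit bundle manipulation on $B_2$ and $B_3$ (using that Hirzebruch surfaces are classified by parity) manufactures a zero entry, reducing to the second case. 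Every step is a concrete bundle isomorphism or diffeomorphism, with no homotopy-theoretic input beyond the stable-range classification in real dimension~$4$.

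Your final remark that the proposition is the two-summand instance of the result in \cite{CMS} is correct and would give a valid one-line proof; but the paper is expressly supplying a direct argument here, so as a self-contained proof your proposal remains incomplete.
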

\begin{proof}
    Let $B_n$ be a Bott manifold with the associated matrix
    $$
\Lambda=\left(%
\begin{array}{cccc}
  0 & \c{2}{1} & \cdots & \c{n}{1} \\
    & 0 &        & \vdots  \\
    &   & \ddots & \c{n}{n-1}\\
    &   &   & 0
\end{array}
\right).
$$
As before, let $x_j$ be the first Chern class of the line bundle $\gamma_j$ which is the pull-back bundle
of the tautological line bundle of $P(\mathbb C\oplus \xi_{j-1})=B_j$ via the
projection $B_n\to B_j$.
Let
$f_j = \sum_{i=1}^{j-1}\c{j}{i}x_i$.
For an element $\alpha \in H^2(B_n)$, let $\gamma^{\alpha}$ be the complex line bundle over $B_n$ with $c_1(\gamma^\alpha) = \alpha$. Let $\xi =
\gamma^{\alpha} \oplus \gamma^{\beta}$ be the sum of two line bundles such that $c(\xi)=1$, and $\alpha = \sum_{j=1}^n a_j x_j$ and $\beta = \sum_{j=1}^{n} b_j x_j$. Then
\begin{align*}
1 = c(\xi) &= c(\gamma^\alpha) c(\gamma^\beta) \\
        &= (1+ \alpha)(1+\beta) \\
        &= 1 + (\alpha + \beta) + \alpha\beta.
\end{align*}
Therefore $\alpha + \beta = 0$ and $\alpha \beta = 0$, which implies $\alpha^2=0$ in $H^\ast(B_n)$.
On the other hand,
\begin{align}
    \alpha^2 = 0 &\Leftrightarrow \sum_{j=1}^{n}(a_jx_j)^2 +
    \sum_{1\leq i<j \leq n} 2a_ja_i x_j x_i = \sum_{j=1}^n a_j^2 (
    x_j^2 - f_jx_j) \nonumber\\
    & \Leftrightarrow  a_j^2 \c{j}{i} = - 2a_ja_i \text{ for all }
    i<j \label{equation:trivial chern class}
\end{align}
Thus $\xi = \gamma^{\alpha} \oplus \gamma^{-\alpha}$ with $\alpha = \sum_{j=1}^n a_j x_j \in H^2(B_n)$ and $a_j^2 \c{j}{i} = - 2a_j a_i$ for all $ 1 \leq i < j \leq n$.

Now, we prove the proposition by induction on $n$. If $n=2$, then the dimension of $\xi$ is equal to the dimension of $B_n$, so we are in the stable range. Hence the total Chern class classifies the complex vector bundle, so the proposition is true for $n=2$. Assume the lemma is true for $B_{n-1}$. We now prove the lemma for $B_n$. These are three cases to consider

\textbf{Case 1} \hspace{.5cm} $a_n = 0$.

In this case, $\xi =\pi_n^\ast(\eta)$, where $\eta =\gamma^\ma \oplus \gamma^{-\ma}$ over $B_{n-1}$. By the assumption, $c(\eta)=1$, and by the induction hypothesis, $\eta$ is trivial. So is $\xi$.

\textbf{Case 2} \hspace{.5cm} $a_n \neq 0$ and $a_k=0$ for some $k<n$.

We may assume that $a_i \neq 0$ for all $i>k$. By \eqref{equation:trivial chern class}, $a_j^2 \c{j}{i} = - 2a_j a_i$ for all $i<j$. Hence, $a_{k+\ell}^2 \c{k+\ell}{k} = -2a_{k+\ell}a_k$ for all $0< \ell \leq n-k$. Since $a_{k+\ell} \neq 0$ and $a_k=0$, $\c{k+\ell}{k}=0$ for all $\ell$. Thus, $\c{k+1}{k} = \c{k+2}{k} = \cdots = \c{n}{k} = 0$. Hence $B_n$ is diffeomorphic to a
Bott manifold $B_n'$ with $\Lambda'$ in Corollary~\ref{corollary:columns with zeros} as the associated matrix.

Let $\{x_1, \ldots, x_n\}$ and $\{y_1, \ldots, y_n \}$ be ordered generator sets of $H^\ast(B_n)$ and $H^\ast(B_n')$ respectively as in \eqref{eqn:cohomology}. Let $\rho : B_n' \rightarrow B_n$ be the diffeomorphism as indicated
in the proof of Corollary~\ref{corollary:columns with zeros}. Then we can see that
\begin{align*}
    \rho^\ast(x_1) &=y_1 \\
    \vdots &\\
    \rho^\ast(x_{k-1}) &= y_{k-1} \\
    \rho^\ast(x_k) &= y_n \\
    \rho^\ast(x_{k+1}) &=y_k \\
    \vdots &\\
    \rho^\ast(x_n) &=y_{n-1}.
\end{align*}
Therefore, $\rho^\ast(\alpha) = a_1 y_1 +
\cdots + a_{k-1}y_{k-1} + a_{k+1} y_k + \cdots + a_{n} y_{n-1} +
a_k y_n$. Since $c(\gamma^\alpha \oplus \gamma^{-\alpha})=0$ in $H^\ast(B_n)$, $c(\rho^\ast(\gamma^\alpha \oplus \gamma^{-\alpha}))=0$ in $H^\ast(B_n')$. Since $a_k=0$ from the assumption, we are in \textbf{Case 1} for $B_n'$. Therefore $\rho^\ast(\gamma^\alpha \oplus \gamma^{-\alpha})$ is trivial on $B_n'$, and so is $\gamma^\alpha \oplus \gamma^{-\alpha}$ on $B_n$.

\textbf{Case 3} \hspace{.5cm} $a_j \neq 0$ for all $j$.

By \eqref{equation:trivial chern class}, $a_j^2 \c{j}{i} = -2a_ja_i$ for all $i<j$, hence, $\c{j}{i} \neq 0$ for all $i,j$.
Note that $B_2$ is a Hirzebruch surface.
Since the diffeomorphism type of a Hirzebruch surface $B_2$ is determined by the parity of $\c{2}{1}$, if $\c{2}{1}$ is even then $B_2 $ is diffeomorphic to $\CP^1 \times \CP^1$. Hence $B_n$ is diffeomorphic to $B_n'$ with $\c{2}{1}'=0$. Thus we may assume that $\c{2}{1} =0$ for simplicity. But then, by \eqref{equation:trivial chern class} either $a_1$ or $a_2$ is zero, which contradicts
to the assumption of Case 3. Therefore we may assume that $\c{2}{1}$ is odd; in fact we may assume that $\c{2}{1}=1$ because the diffeomorphism type of $B_2$ is determined by the parity of $\c{2}{1}$. Since $\c{j}{i} \neq 0$ and $a_j \neq 0$ for all $j$ and $i$, by \eqref{equation:trivial chern class}, $\c{j}{i}a_j = -2a_i$ for all $i<j$. Hence $a_2 = -2 a_1$. Moreover since
$\c{3}{1}a_3 = -2a_1$ we have $\c{3}{2}a_3 = -2a_2 = 4a_1$, hence, $\c{3}{2}=-2\c{3}{1}$. We claim that $B_3$ with
$$
\left(
\begin{array}{ccc}
0 & 1 &c \\
 & 0 & -2c \\
 &  & 0 \\
\end{array}%
\right)
$$
is diffeomorphic to $B_3'$ with
$$
\left(
\begin{array}{ccc}
0 & 1 & c \\
 & 0 & 0 \\
 &  & 0 \\
\end{array}%
\right).
$$ Thus we may assume that $B_n$ has $\c{3}{2}=0$. Then by \eqref{equation:trivial chern class}, $a_2$ or $a_3$ must be zero. Therefore we are in \textbf{Case 2} and the proposition is proved.

It remains to prove the claim.
\begin{align*}
B_3 &= P(\C \oplus (\gamma_1^{c} \otimes \gamma_2^{-2c})) \\
&\cong P((\C \oplus (\gamma_1^{c} \otimes \gamma_2^{-2c}))\otimes \gamma_2^{c}) \\
&\cong P(\gamma_2^{c} \oplus (\gamma_1^{c} \otimes \gamma_2^{-c})). \\
\end{align*}
The total Chern class of $\gamma_2^{c} \oplus (\gamma_1^{c}\otimes\gamma_2^{-c})$ is
$$
c(\gamma_2^{c} \oplus (\gamma_1^{c} \otimes \gamma_2^{-c})) = (1+cx_2)(1+cx_1 - cx_2) = 1+ cx_1
$$
since $x_2^2 = x_1x_2$ in $ H^4(B_2)$.

On the other hand, $c(\C \oplus \gamma_1^c)=1+cx_1$. Therefore, $\gamma_2^{c} \oplus (\gamma_1^{c} \otimes \gamma_2^{-c}) \cong \C \oplus \gamma_1^{c}$ as bundles over $B_2$. Thus, $B_3 \cong P(\C \oplus \gamma_1^c)= B_3'$ which
has the associated matrix
$$
\left(
\begin{array}{ccc}
0 & 1 &c \\
 & 0 & 0 \\
 &  & 0 \\
\end{array}%
\right).
$$
\end{proof}

Now let $B_{n-1}\cong (\mathbb CP^1)^{n-1}$, and for $\alpha \in H^2(B_{n-1})$
let $\gamma^\alpha$ be the complex line bundle over $B_{n-1}$ with $c_1(\gamma^\alpha)=\alpha$
as before.

\begin{proposition}\label{proposition:isomorphism of sum of two line bundle}
Let $\xi_1=\gamma^{\alpha_1} \oplus \gamma^{\alpha_2}$ and
$\xi_2=\gamma^{\beta_1} \oplus \gamma^{\beta_2}$ be sums of two line bundles over $B_{n-1}\cong (\mathbb CP^1)^{n-1}$
such that $c_1(\xi_1)=c_1(\xi_2)$ and $c_2(\xi_1)=c_2(\xi_2)=0$.
Then $\xi_1$ and $\xi_2$ are isomorphic.
\end{proposition}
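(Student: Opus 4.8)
The plan is to reduce the proposition to a single normal-form statement: that any sum of two line bundles $\gamma^\alpha \oplus \gamma^\beta$ over $B_{n-1}\cong(\CP^1)^{n-1}$ with vanishing second Chern class, i.e.\ with $\alpha\beta=0$, is isomorphic to $\underline{\C}\oplus\gamma^{\alpha+\beta}$. Granting this, the proposition is immediate. Since $H^\ast((\CP^1)^{n-1})=\Z[x_1,\dots,x_{n-1}]/(x_1^2,\dots,x_{n-1}^2)$, we have $c(\xi_1)=(1+\alpha_1)(1+\alpha_2)=1+(\alpha_1+\alpha_2)+\alpha_1\alpha_2$, so the hypotheses read $\alpha_1\alpha_2=\beta_1\beta_2=0$ and $\alpha_1+\alpha_2=\beta_1+\beta_2$. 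The normal-form statement then gives $\xi_1\cong\underline{\C}\oplus\gamma^{\alpha_1+\alpha_2}$ and $\xi_2\cong\underline{\C}\oplus\gamma^{\beta_1+\beta_2}$, and these two bundles coincide because line bundles are classified by their first Chern class.

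To prove the normal-form statement I would write $\alpha=\sum a_i x_i$ and $\beta=\sum b_i x_i$. As $x_i^2=0$, the condition $\alpha\beta=0$ becomes $a_ib_j+a_jb_i=0$ for all $i\neq j$. If $\alpha=0$ or $\beta=0$ there is nothing to prove, so assume both are nonzero. First I would show the supports $S_\alpha=\{i:a_i\neq0\}$ and $S_\beta=\{i:b_i\neq0\}$ coincide: if $a_k\neq0$ but $b_k=0$, then $a_kb_j=0$ forces $b_j=0$ for every $j\neq k$, whence $\beta=0$, a contradiction. Writing $S$ for the common support, the relation $a_ib_j=-a_jb_i$ gives $a_i/b_i=-a_j/b_j$ for distinct $i,j\in S$; a three-element chain $i,j,k$ then yields $a_k/b_k=-a_k/b_k$, forcing $a_k=0$. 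Hence $|S|\le2$.

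Consequently $\gamma^\alpha\oplus\gamma^\beta$ is pulled back, along the projection onto the at most two coordinates in $S$, from a bundle over $(\CP^1)^{|S|}$ with $|S|\le2$. On such a base the complex dimension is at most the rank $2$, so we are in the stable range and a rank-$2$ bundle is classified by its total Chern class, exactly as in the $n=2$ step of Proposition~\ref{proposition:trivial bundle}. There both $\gamma^\alpha\oplus\gamma^\beta$ and $\underline{\C}\oplus\gamma^{\alpha+\beta}$ have total Chern class $1+(\alpha+\beta)$ (the degree-$4$ term vanishes since $\alpha\beta=0$), so they are isomorphic downstairs, and pulling back gives the isomorphism over $B_{n-1}$.

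The main obstacle is precisely the support analysis pinning down $|S|\le2$; everything afterward is the stable-range classification already used in Proposition~\ref{proposition:trivial bundle}. One subtlety worth recording is the borderline case $\alpha+\beta=0$, where the statement asserts that $\gamma^\alpha\oplus\gamma^{-\alpha}$ is trivial whenever $\alpha^2=0$: this is consistent with, and is in fact a special case of, Proposition~\ref{proposition:trivial bundle}, and in the support analysis it corresponds to $|S|\le1$.
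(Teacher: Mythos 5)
Your proof is correct, and it rests on the same two pillars as the paper's argument --- the coefficientwise reading of $uv=0$ in $\Z[x_1,\dots,x_{n-1}]/\langle x_i^2\rangle$, namely $u_iv_j+u_jv_i=0$ for $i\neq j$, and the stable-range classification of rank-two bundles pulled back from $(\CP^1)^{|S|}$ with $|S|\le 2$ --- but it is organized differently. The paper compares $\xi_1$ and $\xi_2$ directly, splitting into cases according to whether $\alpha_1$ has at least three, exactly two, or one nonzero coefficient, and in the first case must argue separately that one of $\beta_1,\beta_2$ vanishes using the constraint $\alpha_1+\alpha_2=\beta_1+\beta_2$. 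You instead isolate a normal-form lemma, $\gamma^\alpha\oplus\gamma^\beta\cong\underline{\C}\oplus\gamma^{\alpha+\beta}$ whenever $\alpha\beta=0$, prove it once (your observation that the supports must coincide, together with the ratio argument $a_i/b_i=-a_j/b_j$ applied to a three-element chain, is a cleaner route to the paper's trichotomy on supports), and then apply it to each $\xi_i$ independently, after which the proposition follows because line bundles are determined by $c_1$. What this buys is that the two bundles never have to be played off against each other, avoiding the paper's slightly delicate cross-argument constraining the supports of the $\beta_k$ by those of the $\alpha_k$; the cost is nil, since the stable-range step is exactly the one the paper already invokes in the $n=2$ case of Proposition~\ref{proposition:trivial bundle}. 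One small point worth making explicit in a write-up: the comparison bundle $\underline{\C}\oplus\gamma^{\alpha+\beta}$ is also pulled back from $(\CP^1)^{|S|}$, since $\alpha+\beta$ is supported on $S$, so the classification by total Chern class genuinely applies to both bundles over the same low-dimensional base.
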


\begin{proof}
Let $H^\ast(B_{n-1})\cong \mathbb Z[x_1,\ldots, x_{n-1}]/<x_j^2\mid j=1,\ldots, n-1>$, and
let $\alpha_k, \beta_k$ be elements of $H^2(B_{n-1})$ for $k=1,2$.
From the assumption we have $\alpha_1+\alpha_2=\beta_1+\beta_2$ and $\alpha_1\alpha_2=\beta_1\beta_2=0$.

In general, for two elements $u=\sum_{i=1}^{n-1}u_ix_i$   and $v=\sum_{i=1}^{n-1}v_ix_i$  of $H^2(B_{n-1})$,
the identity $uv=0$ holds if and only if $u_jv_i+u_iv_j=0$ for any $j\neq i$. From this, we can see easily that
if $uv=0$, one of the following three possibilities follows.
\begin{enumerate}
\item If at least three coefficients in $u$ are non-zero, then $v=0$.
\item If exactly two coefficients in $u$, say $u_i$ and $u_j$, are non-zero, then so is
$v$ with $v_jv_i\neq 0$ and $u_jv_i+u_iv_j=0$.
\item If only one coefficient in $u$, say $u_j$, is non-zero, then so is $v$ with $v_j\neq 0$.
\end{enumerate}
Suppose $\alpha_1$ has at least three non-zero coefficients. Then  (1) implies that $\alpha_2=0$ and
$\alpha_1=\beta_1+\beta_2$. If $\beta_1\neq 0$ and has at most two non-zero coefficients then so is $\beta_2$
with  non-zero coefficients at the same places  as $\beta_1$ by (2) and (3), which is a contradicts to the assumption that
$\alpha_1$ has at least three nonzero coefficients because $\alpha_1+\alpha_2=\beta_1+\beta_2$.
So $\beta_1$ is either $0$ or has at most three non-zero coefficients. Therefore by (1) either $\beta_1=0$ or
$\beta_2=0$, and two bundles $\xi_1$ and $\xi_2$ are isomorphic.

Suppose that  $\alpha_1$ has  exactly  two non-zero coefficients. Then (1) and (2) imply that so is $\alpha_2$, and $\beta_1$ and $\beta_2$ are either zero or have exactly two non-zero coefficients at the same places as $\alpha_1$. This means that the bundles $\xi_1$ and $\xi_2$ are pullbacks of bundles over $(\mathbb CP^1)^2$.
Hence those bundles are in stable range and hence they are classified by their Chern classes. Thus $\xi_1$ and
$\xi_2$ are isomorphic.

The case when $\alpha_1$ has only one non-zero coefficient can be proved similarly.

\end{proof}

\section{Twist number and Cohomological complexity} \label{section: twist number}

The {\em twist number}  of a Bott tower $\{B_j \mid j=0,\ldots,n\}$ is the number of nontrivial
fibrations $B_j\to B_{j-1}$ in the sequence. However  there may be several Bott tower
structures for a Bott manifold, so the twist number may not be well-defined for Bott manifolds.
In this section we show that
the twist number of a Bott manifold is  well-defined, namely we show that the twist numbers of any Bott tower
structure of a Bott manifold is constant.

For an $n$-stage Bott manifold $M$ its cohomology ring is isomorphic to
$$
    H^\ast(M) \cong \Z[x_1, \ldots, x_n]/I,
$$ where $I=<x_j(x_j - f_j) : j=1, \ldots, n>$ and $f_j = \sum_{i=1}^{j-1} \c{j}{i}x_i$
with $\deg x_j=2$.
Here the numbers $\c{j}{i}$ can be determined by a Bott tower structure of $M$. Indeed, $\c{j}{i}$'s are
the entries of the matrix (\ref{eqn:lambda}). Hence if the fibration
of the $i$-th stage of a Bott tower structure on $M$ is trivial,
then we may assume that $f_j=0$. Therefore the number of nonzero $f_j$'s  may depend not only on the choices of
generators of the cohomology ring $H^\ast(M)$ but also the Bott tower structures of $M$.
The {\em cohomological complexity of $M$} is the minimal number of nonzero $f_j$'s
among all possible such choices.

In the following theorem we show that the twist number of any Bott tower structure of a Bott manifold
$M$ is equal to the cohomological complexity of $M$. This, in particular, shows that the twist number
of a Bott manifold is well-defined.

\begin{theorem} \label{theorem:twist number = complexity}
    Let $M$ be a Bott manifold. Then the twist number  of any Bott tower structure of  $M$ is equal
    to the cohomological complexity of $M$.
\end{theorem}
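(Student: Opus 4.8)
The plan is to prove the two inequalities separately, since one of them is immediate. Any fixed Bott tower structure of $M$ already furnishes a presentation of $H^\ast(M)$ of the form \eqref{eqn:cohomology}, so the number of nonzero $f_j$'s coming from that structure is one of the quantities over which the cohomological complexity is minimized; hence the complexity is at most the twist number of any structure. All the content lies in the reverse inequality: every Bott tower structure of $M$ has twist number \emph{no larger} than the complexity. Combined with the easy direction, this forces all twist numbers to equal the complexity and in particular to be well-defined.

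To attack the reverse inequality I would fix a structure $\mathcal B=\{B_j\}$ and induct on $n$. Using Corollary~\ref{corollary:columns with zeros} and Lemma~\ref{lemma:2-1} I would first permute the stages so that the trivial fibrations are pushed to the bottom, reducing the problem to the behavior of the top stage $B_n=P(\underline{\C}\oplus\xi_{n-1})\to B_{n-1}$, exactly as in the case analysis of Proposition~\ref{proposition:trivial bundle} and Corollary~\ref{corollary:last twist}. If the top fibration is trivial, then $M\cong B_{n-1}\times\CP^1$, the twist number drops by passing to $B_{n-1}$, and one checks that adjoining a free $\CP^1$-factor leaves the complexity unchanged, so the inductive hypothesis applies.

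The crux is the case where the top fibration is nontrivial: I must show that this nontriviality cannot be hidden by any change of generators, i.e. that it forces an extra nonzero $f_j$ in \emph{every} presentation. The mechanism is to translate algebraic flexibility back into geometry. The key observation, which Proposition~\ref{proposition:trivial bundle} makes usable, is that the stage $B_j\to B_{j-1}$ is trivializable precisely when $f_j=2\delta$ for some $\delta\in H^2(B_{j-1})$ with $\delta^2=0$: after tensoring by $\gamma^{-\delta}$ this is exactly the statement that $\gamma^{-\delta}\oplus\gamma^{f_j-\delta}$ has trivial total Chern class, so \eqref{equation:trivial chern class} and Proposition~\ref{proposition:trivial bundle} upgrade the Chern-class condition to an honest bundle triviality. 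Proposition~\ref{proposition:isomorphism of sum of two line bundle} plays the analogous role one stage higher: over a product $(\CP^1)^{n-1}$ two sums of two line bundles with matching Chern classes are genuinely isomorphic, so a coincidence of cohomological data is realized by an isomorphism of structures rather than merely of rings. In this way the count of nonremovable twists becomes a cohomological invariant that bounds the complexity from below and equals the twist number of $\mathcal B$, closing the chain $t\le\text{complexity}\le t$.

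The step I expect to be the main obstacle is precisely this lower bound on the complexity, namely ruling out an accidental algebraic simplification of the relations that decreases the number of nonzero $f_j$'s below the intrinsic twist count. The delicate point is the Hirzebruch phenomenon already visible in Case~3 of Proposition~\ref{proposition:trivial bundle}: a line bundle $\xi_{j-1}$ with $c_1(\xi_{j-1})$ even is nontrivial, yet the projectivized fibration trivializes, so the twist must be counted with the correct parity-sensitive notion of nontriviality rather than with ``$f_j\neq0$''. Ensuring that the invariant built above is exactly this parity-sensitive count and is additive across the stages is where the bundle-theoretic propositions of Section~2 (and, if needed, the realization of $BQ$-algebras by Bott towers of Theorem~\ref{theorem:BQ-algebra and Bott tower}) do the real work; once that additivity is established the induction goes through and the theorem follows.
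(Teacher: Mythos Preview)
Your proposal has the right setup and correctly identifies the trivializability criterion---a stage $B_m\to B_{m-1}$ can be made trivial exactly when $f_m\equiv 0\pmod 2$ and $(f_m/2)^2=0$ in $H^\ast(B_{m-1})$, by Proposition~\ref{proposition:trivial bundle}---but the heart of the argument is missing. You acknowledge that ``the step I expect to be the main obstacle'' is showing that algebraic flexibility cannot hide a twist, and you gesture at ``additivity across the stages'', but you never say how to locate a stage $m$ that meets the criterion above when $t>s$. That is the entire content of the theorem, and Propositions~\ref{proposition:trivial bundle} and~\ref{proposition:isomorphism of sum of two line bundle} by themselves do not supply it: they convert a Chern-class coincidence into a bundle isomorphism, but they do not tell you that such a coincidence must occur.

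The paper's argument fills this gap by a parity/determinant trick that your proposal never touches. One writes the complexity-$s$ generators $y_k$ in terms of the $x_j$ via a matrix $B=(b_{kj})$ with $\det B=\pm1$. The relations $y_k^2=0$ for $k\le n-s$ give $2b_{kj}b_{ki}=-(b_{kj})^2 c_{ij}$ for all $i<j$. If every entry $b_{kj}$ in the rectangular block $1\le k\le n-s$, $n-t+1\le j\le n$ were even, then since $(n-s)+t>n$ the determinant of $B$ would be even, a contradiction. Hence some $b_{\ell m}$ in that block is odd, and the relation then forces each $c_{im}$ to be even, i.e. $f_m\equiv 0\pmod 2$; the same relation with $k=\ell$ expresses $f_m/2$ as a combination whose square vanishes in $H^\ast(B_{m-1})$. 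Now Proposition~\ref{proposition:trivial bundle} applies and the $m$-th stage is removable, contradicting minimality of $t$. This determinant-and-parity step is the missing idea in your plan; the inductive ``additivity'' you hope for is neither formulated precisely nor needed, and Proposition~\ref{proposition:isomorphism of sum of two line bundle} plays no role here.
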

\begin{proof}
Let
$$
B_n \rightarrow B_{n-1} \rightarrow \cdots \rightarrow B_1 \rightarrow B_0 = \{\text{a point}\}
$$ be a Bott tower structure of $M$  whose twist number is equal to  $t$.
By Corollary \ref{corollary:last twist}, we may assume that
$$B_{n-t} \rightarrow B_{n-t-1} \rightarrow \cdots \rightarrow B_1$$
is a trivial Bott tower. Therefore $B_{\ell} = (\CP^1)^{\ell}$ for $\ell=1, \ldots, n-t$.

Let $s$ be the cohomological complexity of $M$. Then it is clear that $t\geq s$ in general. Suppose $t>s$. Since the twist number of $M$ is $t$, we have
$$
        H^\ast(B_n) = \Z[x_1, \ldots, x_n] / <x_j(x_j - f_j) \mid j=1,\ldots,n>,
$$
where
$$f_j = \left\{%
\begin{array}{ll}
    0 & \text{for $1 \leq j \leq n-t$} \\
    \sum_{i=1}^{j-1} \c{j}{i}x_i & \text{for $ n-t < j \leq n$}. \\
\end{array}%
\right.
$$

Since the cohomological complexity of $B_n$ is $s$, there is an isomorphism
$$
    \psi : H^\ast(B_n) \rightarrow \Z[y_1, \ldots, y_n] / <y_j(y_j - g_j) \mid j=1,\ldots,n>,
$$ where
$$g_j = \left\{%
\begin{array}{ll}
    0 & \text{for $1 \leq j \leq n-s$}, \\
    \sum_{i=1}^{j-1} \d{j}{i}y_i & \text{for $n-s< j \leq n$.}
\end{array}%
\right.$$

\textbf{We claim that there exists $m$ ($n-t < m \leq n$) such that $f_m
\equiv 0 \mod 2$ and $f_m^2 = 0 \in H^\ast(B_{m-1})$.}\newline
If the claim is true, then we can write as $f_m + 2w = 0$ for some $w \in H^2(B_{m-1})$. Therefore,
\begin{align*}
c(\gamma^w \oplus \gamma^{f_m + w}) &= (1+w)(1+f_m+w) = 1+ (f_m + 2w) - \frac{f_m^2}{4} \\
& = 1
\end{align*} Thus, by Proposition \ref{proposition:trivial bundle}, $\gamma^w \oplus \gamma^{f_m+w}$ is a trivial bundle over $B_{m-1}$. Hence $P(\C \oplus \gamma^{f_m})=P(\gamma^w \oplus \gamma^{f_m+w})=B_{m-1}\times \CP^1$. So we can reduce the twist number of $B_n$ to $t-1$, which is a contradiction.

\textbf{We now prove the claim.}
Since $\psi$ is an isomorphism, we can write
$$
    y_i = \sum_{j=1}^n b_{ij} \psi(x_j).
$$
Let  $B = (b_{ij})$ be the coefficient matrix. Note that $\det(B) = \pm1$.

Since $\psi^{-1}(y_k^2) = 0$ in $H^\ast(B_n)$ for $1 \leq k \leq n-s$, we have
\begin{align}
    \psi^{-1}(y_k^2) &= (\psi^{-1}(y_k))^2 = (\sum_{j=1}^n b_{kj}x_j)^2 \nonumber \\
    & = \sum_{j=1}^n (b_{kj})^2x_j^2 + \sum_{1\leq i<j\leq n} 2b_{kj}b_{ki}x_ix_j \label{eqn:6}\\
    &= \sum_{j=1}^{n}(b_{kj})^2(x_j^2 - f_jx_j) \text{, which represents zero in $H^\ast(B_n)$}  \label{eqn:7} \\
    &= \sum_{j=1}^{n}(b_{kj})^2(x_j^2 - \sum_{i=1}^{j-1}\c{j}{i}x_ix_j)\nonumber
\end{align}
By comparing the coefficients of \eqref{eqn:6} and \eqref{eqn:7}, we have
\begin{equation}
    \sum_{i=1}^{j-1} 2b_{kj}b_{ki}x_i =
    -(b_{kj})^2f_j \label{eqn:8}
\end{equation} for $ 1\leq k \leq n-s$ and $1\leq j \leq n$. This implies
\begin{equation} \label{eqn:9}
    2b_{kj}b_{ki} = -(b_{kj})^2 \c{j}{i}
\end{equation}
where $ 1\leq k \leq n-s$ and $ 1\leq i <
j$.

Suppose that all $b_{kj}$ are even for $ n-t+1 \leq j \leq n$
and $ 1\leq k \leq n-s$. Since $t + n - s > n$, $\det B$ must be
even because, in general, if
$A = \left(
\begin{array}{cc}
C & D \\
E & F \\
\end{array}
\right)$
is an $n \times n$ matrix and if $D$ is a $k \times \ell$ matrix all of whose entries are even with $k + \ell >n$, then $\det B$ is even.
This is a contradiction. Thus there is an odd number $b_{\ell m}$ for some $ 1 \leq \ell \leq n-s$ and $ n-t+1 \leq m \leq n$.

Suppose $f_m$ is not congruent to 0  modulo 2, i.e., there exists an odd number $\c{m}{h}$ for some $1\leq h \leq m-1$. Then from \eqref{eqn:9}, $2b_{km}b_{kh} = -(b_{km})^2 \c{m}{h}$ for $1 \leq k \leq n-s$. It implies that $b_{km} \equiv 0 \text{ (mod $2$)}$ for all $1 \
\leq k \leq n-s$, which contradicts to that $b_{\ell m}$ is odd. Thus, $f_m \equiv 0 \text{ (mod $2$)}$.

On the other hand, from
\eqref{eqn:8}, $\sum_{i=1}^{m-1} 2b_{km}b_{ki}x_i = -(b_{km})^2f_m$ with $k=\ell$, $\frac{f_m}{2} = - \sum_{j=1}^{m-1} \frac{b_{\ell j}}{b_{\ell m}}x_j$. Thus we have
\begin{align*}
    \left( \frac{f_m}{2} \right)^2 &= \left(- \sum_{j=1}^{m-1} \frac{b_{\ell j}}{b_{\ell m}}x_j\right)^2 \\
    &= \sum_{j=1}^{m-1} \left(\left(\frac{b_{\ell j}}{b_{\ell m}}\right)^2 x_j^2 +2
 \sum_{h=1}^{j-1} \frac{b_{\ell j}b_{\ell h}}{(b_{\ell m})^2}x_j x_h \right) \\
    &= \frac{\sum_{j=1}^{m-1} (b_{\ell j})^2 (x_j^2 - f_j x_j)}{(b_{\ell m})^2} \hspace{2cm} \text{by \eqref{eqn:8}}\\
    &=0 \in H^\ast(B_{n-1})
\end{align*}

This proves the claim.
\end{proof}

From the proof of Theorem~\ref{theorem:twist number = complexity} the following corollary follows immediately.
\begin{corollary}\label{coro:well definedness of twist number}
The twist number of a Bott manifold $M$ is well-defined, i.e., any two Bott tower structures of $M$ have
the same twist number.
\end{corollary}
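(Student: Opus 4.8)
The plan is to read off the corollary directly from Theorem~\ref{theorem:twist number = complexity}, the key point being that the cohomological complexity is an invariant of $M$ alone, with no reference to any chosen Bott tower structure. First I would recall that the cohomological complexity $s$ of $M$ is, by definition, the minimum of the number of nonzero $f_j$'s taken over all presentations of $H^\ast(M)$ of the form \eqref{eqn:cohomology}. In particular, $s$ is a single number attached to the ring $H^\ast(M)$, computed by minimizing over all admissible choices of generators and relations, and it therefore carries no dependence on how one realizes $M$ as the top stage of a Bott tower.

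Next I would take two arbitrary Bott tower structures of $M$, say with twist numbers $t_1$ and $t_2$. Applying Theorem~\ref{theorem:twist number = complexity} to the first structure gives $t_1 = s$, and applying the same theorem to the second gives $t_2 = s$. Since $s$ is literally the same number in both equalities, being a function of $H^\ast(M)$ only, I conclude $t_1 = t_2 = s$. As the two structures were arbitrary, any two Bott tower structures of $M$ share the common twist number $s$, so the twist number of $M$ is well-defined.

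There is essentially no obstacle here: all of the work has already been carried out in Theorem~\ref{theorem:twist number = complexity}, which establishes the nontrivial equality between a quantity defined via a fixed Bott tower structure (the twist number) and a quantity defined intrinsically from the cohomology ring (the cohomological complexity). The only thing worth stressing in the write-up is precisely that the cohomological complexity is intrinsic to $H^\ast(M)$, since it is exactly this independence that promotes the theorem's equality into the desired well-definedness statement.
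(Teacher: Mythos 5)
Your proposal is correct and matches the paper exactly: the paper also deduces the corollary immediately from Theorem~\ref{theorem:twist number = complexity}, using precisely the observation that the cohomological complexity depends only on $H^\ast(M)$ and hence serves as the common value of the twist numbers of all Bott tower structures.
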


\section{BQ-algebras and Bott manifolds}
Recall that a $2n$-dimensional manifold $M$ is a quasitoric manifold over a simple (combinatorial) polytope $P$
if there is a locally standard $n$-torus $T^n$ action on $M$ and a surjective map $\pi:M\to P$ whose
fibers are the $T^n$-orbits.  For a $2n$-dimensional quasitoric manifold $M$ over a simple polyotpe $P$
there  corresponds a {\em characteristic map} $\chi:\mathcal F \to \mathbb Z^n$ well-defined up to sign
where $\mathcal F$ is the
set of all facets of $P$. A characteristic map should satisfy the following two conditions:
\begin{itemize}
\item $\chi(F)$  is a  primitive vector for any $F\in\mathcal F$, and
\item  if  $n$ facets $F_1\ldots, F_n$ are intersecting at vertex $v$ of $P$, then
$\{\chi(F_1),\ldots, \chi(F_n)\}$ forms a linearly independent subset in $\mathbb Z^n$.
\end{itemize}
Conversely, for simple polytope $P$ and a map  $\chi:\mathcal F \to \mathbb Z^n$ satisfying the above two
conditions, there exists a unique quasitoric manifold up  equivalence whose characteristic map is $\chi$.

Two quasitoric manifolds $\pi_M: M\to P$ and $\pi_N:N\to P$ over $P$ are {\em equivalent} if there is a weak $T^n$- equivariant
homeomorphism $\phi :M\to N$ (i.e., there exists an automorphism $\rho$ on $T^n$ such that $\phi(tx)=\rho(t)\phi(x)$)
such that $\pi_N\circ\phi=\pi_M$.

Let $P$ be an $n$-dimensional simple polytope with $m$ facets, and let $M$ be a quasitoric manifold over $P$.
Then  we can find a characteristic map   $\chi$ for $M$  such that
$\chi(F_1)=(1,0,\ldots,0),\ldots, \chi(F_n)=(0,\ldots,0,1)$ where $F_1,\ldots, F_n$ are the facets meeting at one
particular vertex $p\in P$. Then we can define an $(m-n)\times n$ matrix $A$ whose row vectors are $\chi(F_{n+1}), \ldots, \chi(F_{m})$. This matrix $A$
is called a {\em characteristic matrix} of $M$. For the details about quasitoric manifolds we refer the reader to \cite{DJ}. We note that a Bott manifold $B_n$ associated with the matrix $\Lambda$ in \eqref{eqn:lambda} admits the canonical nice $T^n$-action with which $B_n$ becomes a quasitoric manifold. The characteristic matrix of $B_n$ is then equal to $-\Lambda - I_n$, where $I_n$ is the identity matrix of size $n$, see \cite{MP} for details.

In this section we will consider quasitoric manifolds whose cohomology rings resemble those of Bott manifolds.
For this we need the following definition.
\begin{definition} \label{def:BQ-algebra}
A graded algebra $S$ over $\mathbb Z$  generated by $x_1,\ldots,x_n$
of degree $2$ is called a Bott quadratic algebra (BQ-algebra) over $\mathbb Z$
of rank $n$ if
\begin{enumerate}
\item $x_k^2=\sum_{i<k}\c{k}{i}x_ix_k$ where $\c{k}{i}\in\mathbb Z$ for $1\le k\le n$, (in particular $x_1^2=0$,) and
\item $\prod_{i=1}^n x_i\ne 0$.
\end{enumerate}
BQ-algebra over $\mathbb Z_2$ is defined similarly.
\end{definition}
Originally, BQ-algebra over $\mathbb Z_2$ is  defined in \cite{MP}, and  we extend their definition here for our purpose.
The cohomology ring of a Bott manifold is a BQ-algebra over $\mathbb Z$.
So one might ask whether the converse is true, i.e., if the cohomology ring of
a quasitoric manifold is a BQ-algebra over $\mathbb Z$, then is the quasitoric manifold homeomorphic
to a Bott tower? The affirmative and stronger answer to the question is given in the following theorem.

\begin{theorem}\label{theorem:BQ-algebra and Bott tower}
Let $M$ be a $2n$-dimensional quasitoric manifold over a simple polytope $P$, and
let $A$ be a characteristic matrix of $M$.
Then the following are equivalent.
\begin{enumerate}
\item $M$ is equivalent to an $n$-stage Bott manifold.
\item $H^\ast(M)$ is a BQ-algebra of rank $n$ over $\mathbb Z$.
\item $P$ is combinatorially equivalent to the cube $I^n$ and  $A$ is  conjugate
to an upper triangular matrix by a permutation matrix.
\end{enumerate}
\end{theorem}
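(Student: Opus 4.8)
The plan is to prove the cycle of implications $(1)\Rightarrow(2)\Rightarrow(3)\Rightarrow(1)$, isolating the single hard link. The implication $(1)\Rightarrow(2)$ is immediate: if $M$ is an $n$-stage Bott manifold then by \eqref{eqn:cohomology} its defining relations are exactly $x_j^2=\sum_{i<j}\c{j}{i}x_ix_j$, and $\prod_{i}x_i$ is the (nonzero) top class, so $H^\ast(M)$ is a BQ-algebra of rank $n$. For $(3)\Rightarrow(1)$ I would use that a quasitoric manifold over a fixed polytope is determined up to equivalence by its characteristic matrix, so it suffices to realize the data of $M$ as Bott data. If $P\cong I^n$ and $A$ is conjugate to an upper triangular matrix by a permutation matrix, then after relabelling facets I may assume $A$ is upper triangular. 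The nonsingularity condition over the cube forces every principal minor of $A$ to be $\pm1$, so each diagonal entry is $\pm1$; changing the signs of the relevant characteristic vectors I may take all diagonal entries to equal $-1$. Then $A=-\Lambda-I_n$ for a strictly upper triangular integral $\Lambda$, which is precisely the characteristic matrix of the Bott manifold with associated matrix $\Lambda$, whence $M$ is equivalent to it.

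The substance is $(2)\Rightarrow(3)$, which I would split into three steps. First, the number of facets: $H^2(M)$ is the free module on the $m$ facet classes modulo the $n$ linearly independent relations coming from $\chi$, so $\rank H^2(M)=m-n$; since a BQ-algebra of rank $n$ has $\rank H^2=n$, we get $m=2n$, i.e. $P$ is a simple $n$-polytope with $2n$ facets. Second, $P$ is the cube: by condition (2) the quotient of $\Z[x_1,\dots,x_n]$ by the relations (1) does not collapse, so a BQ-algebra is a complete intersection of $n$ quadrics, hence a quadratic algebra, and quadraticity (that the relations among degree-two generators are generated in degree two) is intrinsic to the graded ring, independent of the presentation. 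Applying this to the Davis--Januszkiewicz presentation $H^\ast(M)\cong\Z[v_1,\dots,v_{2n}]/(\mathcal I_P+\mathcal J)$, with $\mathcal I_P$ the Stanley--Reisner ideal and $\mathcal J$ the linear ideal of $\chi$, forces every minimal non-face of $P$ to be a pair; that is, $P$ is flag. A flag simple $n$-polytope has at least $2n$ facets, with equality only for the cube (dually, the minimal flag $(n-1)$-sphere is the cross-polytope), and $P$ has exactly $2n$; hence $P\cong I^n$. As a consistency check, the squarefree monomials form a $\Z$-basis of a BQ-algebra, so the total Betti number of $M$ is $2^n=f_0(I^n)$.

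Third, $A$ is triangulable: now $\mathcal I_P=\langle v_kv_{n+k}:1\le k\le n\rangle$, and writing $x_k=v_{n+k}$ and using $\mathcal J$ to eliminate the remaining facet classes, the Stanley--Reisner relations become $x_k^2=-\sum_{i\ne k}A_{kk}A_{ik}x_ix_k$ with $A_{kk}=\pm1$. It then suffices to show the BQ structure forces the digraph on $\{1,\dots,n\}$ with an arrow $i\to k$ whenever $A_{ik}\ne0$ (for $i\ne k$) to be acyclic, since a topological sort of an acyclic such digraph exhibits $A$ as conjugate to upper triangular by a permutation. A directed cycle would restrict, on the facial submanifold over the corresponding lower-dimensional cube, to an irreducible cyclic characteristic matrix, and a computation of exactly the type carried out in Case~3 of the proof of Proposition~\ref{proposition:trivial bundle} (the Hirzebruch-surface analysis, i.e. the nonexistence of a nonzero degree-two class squaring to zero) contradicts the BQ axiom $x_1^2=0$. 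Hence the digraph is acyclic and $A$ is triangulable.

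The main obstacle is this third step: passing from the abstract existence of triangular quadratic relations in \emph{some} generating set (the BQ hypothesis) to triangularity of the concrete characteristic matrix $A$ in the facet-class generators. The mechanism that rules out directed cycles is an anisotropy phenomenon already visible for Hirzebruch surfaces, so I would model the argument on that computation. A secondary point demanding care is the implication ``quadratic $\Rightarrow$ flag'' in the second step together with the combinatorial input identifying the minimal flag sphere with the cross-polytope.
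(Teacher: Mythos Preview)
Your cycle of implications and your identification of $(2)\Rightarrow(3)$ as the crux match the paper. For $(3)\Rightarrow(1)$ the paper simply quotes \cite[Proposition~3.2]{MP}; your direct argument is fine. For the step that $P$ is a cube you take a different route: the paper reduces modulo $2$ and invokes \cite[Theorem~5.5]{MP} (equivalently \cite[Theorem~1.6]{CPS}) as a black box, whereas you argue that a BQ-algebra is intrinsically quadratic, hence $P$ is flag, hence (having exactly $2n$ facets) $P$ is the cube. This is a legitimate alternative and is essentially what those cited results establish; note however that ``$H^\ast(M)$ quadratic $\Rightarrow P$ flag'' is not a tautology about the Stanley--Reisner ideal, since after quotienting by the linear ideal $\mathcal J$ a higher-degree minimal non-face relation could in principle become redundant, and ruling this out is precisely the content of the theorem you would otherwise be citing.

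The genuine gap is in your third step. First, ``contradicts the BQ axiom $x_1^2=0$'' is not enough: the restriction $\bar x_1$ of $x_1$ to the facial submanifold $F$ over the cycle may well be zero. The actual mechanism is an induction down the triangular BQ relations: if $\bar x_1=0$ then $\bar x_2^{\,2}=\c{2}{1}\,\bar x_1\bar x_2=0$, and anisotropy of $H^2(F)$ forces $\bar x_2=0$; iterating, all $\bar x_i=0$, contradicting surjectivity of restriction (equivalently, entire columns of the change-of-basis matrix $B$ vanish, contradicting $\det B=\pm1$). This is exactly what the paper does, though it computes directly in $H^\ast(M)$ rather than via restriction. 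Second, and more seriously, your sketch treats only $2$-cycles: the anisotropy you need for the $4$-dimensional face with $a_{st}a_{ts}=2$ is the computation around \eqref{equation:2b1sb1t}, but for a minimal cycle of length $k\ge 3$ you must first know that all proper principal minors of the cycle submatrix equal $+1$, so that \cite[Lemma~3.3]{MP} pins down its explicit single-cycle shape, and only then does the analogous numerical contradiction go through. The paper organizes this as an induction on the rank of principal minors (Claim~1: $a_{ij}a_{ji}=0$; Claim~2: every principal minor is $+1$), concluding triangularity again via \cite[Lemma~3.3]{MP}. Finally, your pointer to Case~3 of Proposition~\ref{proposition:trivial bundle} is misplaced: that is a bundle-theoretic reduction on a three-stage Bott tower and is not the computation you need here.
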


\begin{proof}
(1) $\Rightarrow$ (2)  Clear.

(3) $\Leftrightarrow$ (1) follows from Proposition~3.2 in \cite{MP}.

(2) $\Rightarrow$ (3) If $H^\ast(M)$ is a BQ-algebra of rank $n$ over $\mathbb Z$,
then $H^\ast(M:\mathbb Z_2)$ is a BQ-algebra of rank $n$ over $\mathbb Z_2$.
By \cite[Theorem 5.5]{MP} (or \cite[Theorem 1.6]{CPS})
$P$ is combinatorially equivalent to the cube $I^n$.
Therefore $A$ is an $n \times n$ matrix. We may assume that
$$-A=
\left(
  \begin{array}{cccc}
    1 & a_{12} & \cdots & a_{1n} \\
    a_{21} & 1 & \cdots & a_{2n} \\
    \vdots & \vdots & \ddots & \vdots \\
    a_{n1} & a_{n2} & \cdots &1 \\
  \end{array}
\right)
.$$ We note that the conditions of a characteristic map implies that all principal minors are $\pm 1$, and by general facts on the cohomology of quasitoric manifolds we have an isomorphism
$$H^\ast(M)\cong \mathbb Z[y_1,\ldots, y_n]/<g_j\mid j=1,\ldots, n>$$
where $g_j=y_j\sum_{i=1}^n\a{j}{i}y_i$ and $\a{j}{j}=1$ for all $i=1,\ldots, n$.
Since $H^\ast(M)$ is a BQ-algebra over $\mathbb Z$ there is a $\Z$-algebra isomorphism
$$\phi\colon H^\ast(M)\to \mathbb Z[x_1,\ldots,x_n]/<x_j(x_j-f_j)\mid j=1,\ldots, n>,
$$
where $f_j=\sum_{i=1}^{j-1}\c{j}{i}x_i$.
Therefore
$$x_i=\sum_{j=1}^n  b_{ij} \phi (y_j)$$
with $\det B=\pm 1$
where $B$ is the $n\times n$ matrix $(b_{ij})$.
Since all principal $2\times 2$ minors of $A$ are $\pm 1$ by the conditions of characteristic map, we have
$1-\a{j}{i}\a{i}{j}=\pm 1$ for all $i\ne j$.

\textbf {We first claim that $\a{j}{i}\a{i}{j}=0$ for all $i\ne j$.}
Assume otherwise. Then $\a{s}{t}\a{t}{s}=2$ for some $s$ and $t$.
Since $\phi^{-1}(x_1^2)=(\sum_{j=1}^n b_{1j}y_j)^2=0$ in $H^\ast(M)$,
we have
\begin{equation}\label{equation:b1i}
\left( \sum_{j=1}^n b_{1j} y_j \right)^2 = \sum_{j=1}^n b_{1j}^2 \left( y_j \sum_{i=1}^n \a{j}{i} y_i \right).
\end{equation}
Compare the coefficients of $y_sy_t$-terms on both sides of the
equation~(\ref{equation:b1i}) to get
\begin{equation}\label{equation:2b1sb1t}
2b_{1s}b_{1t}=b_{1s}^2\a{s}{t} + b_{1t}^2\a{t}{s}
\end{equation}
Since $\a{t}{s}\a{s}{t}=2$ we have $(\a{t}{s}, \a{s}{t})= \pm(1, 2)$ or $\pm(2, 1)$. Therefore, the equation \eqref{equation:2b1sb1t} is equivalent to either $ (b_{1s} \pm b_{1t})^2 + b_{1t}^2 = 0$ or $ (b_{1s} \pm b_{1t})^2 + b_{1s}^2 = 0$. The only real solutions for equation~(\ref{equation:2b1sb1t})  is $b_{1s}=b_{1t}=0$.
Hence $\phi^{-1}(x_1)=\sum_{j\ne s,t}b_{1j}y_j$.

We now consider the second relation $x_2(x_2-f_2)$ of the BQ-algebra.
Here $f_2=\c{2}{1}x_1$.
Then $\phi^{-1}(f_2)=\phi^{-1}(\c{2}{1} x_1)=\c{2}{1}\phi^{-1}(x_1)$ has
no  $y_s$ and $y_t$-terms. Note that
\begin{align*}
\phi^{-1}(x_2(x_2-f_2))&=\phi^{-1}(x_2)^2 - \phi^{-1}(x_2)\phi^{-1}(f_2)\\
&=(\sum_{j=1}^n b_{2j}y_j)^2 - (\sum_{j=1}^n b_{2j}y_j)\c{2}{1}(\sum_{j\ne s,t}b_{1j}y_j)\\
&=0\in H^\ast(M).
\end{align*}
Therefore we have the following equation.
\begin{equation}\label{equation:b2jyj}
(\sum_{j=1}^n b_{2j}y_j)^2-(\sum_{j=1}^n b_{2j}y_j)\c{2}{1}(\sum_{j\ne s,t}b_{1j}y_j) = \sum_{j=1}^n \alpha_j g_j
\end{equation} for some $\alpha_j \in \Z$ with $j=1, \ldots, n$.
Since the second term of the left hand side of the equation \eqref{equation:b2jyj} has no
$y_sy_t$-term, no $y_s^2$-term and no $y_t^2$-term, by comparing the coefficients of $y_s^2$ and $y_t^2$  we can see that  $\alpha_s = b_{2s}^2$ and $\alpha_t = b_{2t}^2$. Hence by comparing the coefficients of $y_sy_t$ of
equation~(\ref{equation:b2jyj}) we get
$$
2b_{2s}b_{2t}=\a{s}{t}\alpha_s+\a{t}{s}\alpha_t=\a{s}{t}b_{2s}^2+\a{s}{t}b_{2t}^2
$$ which is of the same form as in equation \eqref{equation:2b1sb1t}. Hence, $b_{2s}=b_{2t}=0$. Note that $\phi^{-1}(f_3)$ also has no $y_s$ and $y_t$-terms.
Thus by the same argument as above, we can see that
$b_{3s}=b_{3t}=0$.
Continue the similar argument for $x_i(x_i - f_i)$ to get
$$b_{is}=b_{it}=0 \quad\textrm{for all } i=1,\ldots, n.$$
This implies that the $s$-th and $t$-th rows of the matrix $B$ are zero,
which implies $\det B=0$. This is a contradiction.
Therefore the claim that $\a{j}{i}\a{i}{j}=0$ for all $i\ne j$ is proved.

\textbf{We now claim that all principal minors of $A$ are $1$ }
by induction on the rank of the minors. By the previous claim, any principal minor of rank $2$ is 1.
Assume the claim is true for all principal minors of $\rank <k$ with
$k\ge 3$. Suppose there exists a negative principal minor $\Xi$ of
rank $k$. Since all proper minors of $\Xi$ is $1$ and $\Xi=-1$,
by Lemma~3.3 of \cite{MP} we have
$$-1=\Xi=\det\left(
\begin{array}{ccccc}
1 & h_{j_1} & 0 & \dots & 0\\
0 & 1 & h_{j_2}& \dots & 0\\
\vdots & & \ddots & &\vdots \\
0 & \dots & \dots& 1 & h_{j_{k-1}}\\
h_{j_k} & 0 & & \cdots & 1
\end{array}\right)
$$
where $h_{i_i}\ne 0$ for all $i=1,\ldots, k$.
Consider the equation~(\ref{equation:b1i}) again, but now
compare the coefficients of $y_{j_i}y_{j_{i+1}}$ where $y_{j_{k+1}}=y_{j_1}$
for convenience.
Then we have the relation
\begin{equation}\label{equation:2bijib1ji+1}
2b_{1j_i}b_{1j_{i+1}}=h_{j_i}b_{1j_{i}}^2.
\end{equation}
Suppose one of $b_{1j_{i}}$ for $i=1,\ldots, k$ is zero. Then from \eqref{equation:2bijib1ji+1} all others must be zero, too. By a similar argument applied to the second relation $\phi^{-1}(f_2)=\c{2}{1}\phi^{-1}(x_1)$,
we can see that $b_{\ell j_i}=0$ for all $\ell=1,\ldots, n$ and $i=1,\ldots, k$.
Thus $\det B=0$, which is a contradiction.
Therefore all $b_{1j_i}$ are nonzero for $i=1,\ldots,k$, and hence, so are $h_{j_{i}}$'s.
Then
$-1=\Xi=1+(-1)^k\prod_{i=1}^k h_{j_i}$. Thus $(-1)^k\prod_{i=1}^k h_{j_i}=-2$.
By multiplying each side of equation~(\ref{equation:2bijib1ji+1}) for all $i=1,\ldots, k$,
we have
$$2^k(\prod_{i=1}^k b_{1j_i})^2=(-1)^{k+1}2(\prod_{i=1}^kb_{1j_i})^2,$$ which is
a contradiction. This proves the claim.

Therefore the theorem follows from Lemma~3.3 of \cite{MP}.
\end{proof}

It is shown in \cite{CMS} that three-stage Bott manifolds are cohomologically rigid, i.e.,
if $M$ and $N$ are two three-stage Bott manifolds whose cohomology rings are isomorphic, then
they are diffeomorphic. The following corollary shows  the cohomological rigidity of the class
of $6$-dimensional quasitoric
manifolds whose cohomology rings are BQ-algebras over $\mathbb Z$.

\begin{theorem}\label{theorem:3 stage Bott manifold}
Let $M$ and $N$ be $6$-dimensional  quasitoric manifolds whose cohomology rings
are BQ-algebras over $\mathbb Z$. If $H^\ast(M)\cong H^\ast(N)$ as
graded rings, then $M$ and $N$ are diffeomorphic.
\end{theorem}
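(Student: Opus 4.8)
The plan is to combine Theorem~\ref{theorem:BQ-algebra and Bott tower} with the cohomological rigidity of three-stage Bott manifolds established in \cite{CMS}. First I would record that a $6$-dimensional quasitoric manifold is a quasitoric manifold over a $3$-dimensional simple polytope, so here $n=3$; accordingly the hypothesis that $H^\ast(M)$ and $H^\ast(N)$ are BQ-algebras over $\Z$ forces them to have rank $3$, since the top nonzero cohomology sits in degree $6$ and condition~(2) of Definition~\ref{def:BQ-algebra} requires the product of the three degree-$2$ generators to be nonzero. Applying the implication (2)$\Rightarrow$(1) of Theorem~\ref{theorem:BQ-algebra and Bott tower} to each of $M$ and $N$, I obtain three-stage Bott manifolds $B_M$ and $B_N$ to which $M$ and $N$ are respectively equivalent as quasitoric manifolds.

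Next I would transport the given cohomology isomorphism along these identifications. An equivalence of quasitoric manifolds in particular induces a graded ring isomorphism on integral cohomology, so $H^\ast(B_M)\cong H^\ast(M)$ and $H^\ast(B_N)\cong H^\ast(N)$. Combining these with the hypothesis $H^\ast(M)\cong H^\ast(N)$ yields a graded ring isomorphism $H^\ast(B_M)\cong H^\ast(B_N)$ between the cohomology rings of the two three-stage Bott manifolds.

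Finally, since $B_M$ and $B_N$ are three-stage Bott manifolds with isomorphic cohomology rings, the cohomological rigidity theorem for three-stage Bott manifolds of \cite{CMS} shows that $B_M$ and $B_N$ are diffeomorphic. Chaining this diffeomorphism with the identifications of $M$ with $B_M$ and of $N$ with $B_N$ then gives that $M$ and $N$ are diffeomorphic, as desired.

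The one step requiring care is the passage from the notion of \emph{equivalence} (a weak equivariant homeomorphism) supplied by Theorem~\ref{theorem:BQ-algebra and Bott tower} to the diffeomorphism asserted in the conclusion. The resolution is that condition~(3) of that theorem, together with Proposition~3.2 of \cite{MP}, identifies a quasitoric manifold over the cube $I^3$ with upper-triangular characteristic matrix and the corresponding Bott manifold as \emph{smooth} manifolds, so the identifications $M\cong B_M$ and $N\cong B_N$ may be taken to be diffeomorphisms; equivalently, one uses that quasitoric manifolds over the same polytope with the same characteristic data are equivariantly diffeomorphic. I expect this bookkeeping between the homeomorphism-level equivalence and the smooth conclusion to be the only genuine subtlety, the rest of the argument being a purely formal composition of the two cited results.
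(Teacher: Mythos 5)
Your overall route is the same as the paper's: apply Theorem~\ref{theorem:BQ-algebra and Bott tower} to replace $M$ and $N$ by three-stage Bott manifolds, transport the cohomology isomorphism, and invoke the cohomological rigidity of three-stage Bott manifolds from \cite{CMS}. You also correctly isolate the one genuine subtlety, namely upgrading the \emph{equivalence} produced by Theorem~\ref{theorem:BQ-algebra and Bott tower} to a diffeomorphism. However, your proposed resolution of that subtlety is where the gap lies. The classification of quasitoric manifolds by characteristic pairs $(P,\chi)$ in \cite{DJ}, and likewise Proposition~3.2 of \cite{MP}, only identifies two quasitoric manifolds with the same characteristic data up to weak equivariant \emph{homeomorphism}; nothing in the paper's toolkit asserts that a quasitoric manifold over $I^3$ with upper-triangular characteristic matrix is \emph{diffeomorphic} to the corresponding Bott manifold, and your parenthetical claim that ``quasitoric manifolds over the same polytope with the same characteristic data are equivariantly diffeomorphic'' is not established by the cited results. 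So after your step two you only know that $M$ is homeomorphic to $B_M$ and $N$ to $B_N$, and chaining with the diffeomorphism $B_M\cong B_N$ yields only a homeomorphism $M\cong N$, not the diffeomorphism the theorem asserts.

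The paper closes exactly this gap by a different device: since quasitoric manifolds are simply connected smooth manifolds and we are in real dimension $6$, the classification theorems of Wall \cite{Wa} and Jupp \cite{Ju} apply. These classify such $6$-manifolds up to diffeomorphism by data (cohomology ring, $w_2$, $p_1$) that is preserved under homeomorphism, so the homeomorphisms $M\cong B_M$ and $N\cong B_N$ can be promoted to diffeomorphisms, after which the \cite{CMS} rigidity result finishes the argument. This dimension-specific input is essential here and is precisely why the statement is made for $6$-dimensional quasitoric manifolds rather than in all dimensions; your argument as written would otherwise prove the stronger (and unavailable) statement that the conclusion holds with ``diffeomorphic'' for every $n$ for which $n$-stage Bott manifolds are cohomologically rigid.
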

\begin{proof}
Since $H^\ast(M)$ and  $H^\ast(N)$ are BQ-algebra over $\mathbb Z$,
$M$ and $N$ are equivalent to $6$-dimensional  Bott manifolds. In particular they are hoemomorphic to
$6$-dimensional  Bott manifolds. Since all quasitoric manifolds are simply connected, by the result of Wall \cite{Wa} and Juppe \cite{Ju}, we can see that $M$ and $N$ are actually diffeomorphic to $6$-dimensional Bott manifolds. Hence the corollary follows from the above mentioned result of
\cite{CMS}.
\end{proof}

\section{Cohomological rigidity of one-twist Bott manifolds}
In this section we prove the cohomological rigidity of one-twist Bott manifolds.
Let  $\{B_{j} \mid 0 \leq j \leq n \}$ be a one-twist Bott tower.
By Corollary~\ref{corollary:last twist} we may assume that $B_{n-1}=(\mathbb CP^1)^{n-1}$.
Hence $H^2(B_{n-1})\cong
\mathbb Z[x_1,\ldots,x_{n-1}]/<x_j^2\mid x=1,\ldots, n-1>$.
Let
$M(\alpha)=B_n=P(\mathbb C\oplus \gamma^{\alpha})$ where $\gamma^{\alpha}$ is the line bundle over $B_{n-1}$
with the first Chern class
$$c_1(\gamma^{\alpha})=\alpha=\sum_{i=1}^{n-1}a_ix_i\in H^2(B_{n-1}).$$

\begin{theorem}\label{theorem:rigidity of one-twist Bott tower}
Let $\alpha$ and $\beta$ be two elements of $H^2(B_{n-1})$ where
$B_{n-1}=(\mathbb CP^1)^{n-1}$, and let $M(\alpha)$ and $M(\beta)$ be one-twist Bott manifolds as defined above.
Then
the following are equivalent.
\begin{enumerate}
\item $M(\alpha)$ and $M(\beta)$ are diffeomorphic.
\item $H^\ast(M(\alpha))\cong H^\ast(M(\beta))$ as graded rings.
\item There is an automorphism $\phi$ of $H^\ast(B_{n-1})$ such that $\phi(\alpha) \equiv \beta$ mod $2$ and
$\phi(\alpha^2)=\beta^2$.
\item Let $\alpha=\sum_{i=1}^{n-1}a_ix_i$ and $\beta=\sum_{i=1}^{n-1}b_ix_i$.Then there is a permutation $\sigma$ on $\{1,\ldots, n-1\}$ such that $a_{\sigma(i)} \equiv b_i$ mod $2$ for any
$i$ and $|a_{\sigma(i)}a_{\sigma(j)}|=|b_ib_j|$ for any $i\ne j$.
\end{enumerate}
Moreover, any isomorphism between $H^\ast(M(\alpha))$ and $H^\ast(M(\beta))$ preserves the total Pontrjagin classes of $M(\alpha)$ and $M(\beta)$.
\end{theorem}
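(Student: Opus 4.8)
The plan is to prove the four statements equivalent in the cycle (1) $\Rightarrow$ (2) $\Rightarrow$ (3) $\Rightarrow$ (4) $\Rightarrow$ (1), and then to read off the Pontrjagin assertion from the explicit description of isomorphisms obtained in the step (2) $\Rightarrow$ (3). Throughout I write
\[
H^\ast(M(\alpha))\cong \Z[x_1,\ldots,x_n]/\langle x_i^2\ (i\le n-1),\ x_n^2-\alpha x_n\rangle,\qquad \alpha=\textstyle\sum_{i=1}^{n-1}a_ix_i,
\]
and similarly $H^\ast(M(\beta))$ in variables $y_1,\ldots,y_n$ with top relation $y_n^2=\beta y_n$. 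The implication (1) $\Rightarrow$ (2) is immediate. For (3) $\Leftrightarrow$ (4) I would first record that every graded automorphism $\phi$ of $H^\ast((\CP^1)^{n-1})=\Z[x_1,\ldots,x_{n-1}]/\langle x_i^2\rangle$ is a signed permutation $\phi(x_i)=\epsilon_i x_{\tau(i)}$ with $\epsilon_i=\pm 1$: the relation $\phi(x_i)^2=0$ forces at most one nonzero coefficient in each $\phi(x_i)$, and invertibility over $\Z$ forces those to be units. Substituting this form of $\phi$ into the two conditions of (3) and putting $\sigma=\tau^{-1}$ yields exactly the parity condition $a_{\sigma(i)}\equiv b_i$ and the product condition $|a_{\sigma(i)}a_{\sigma(j)}|=|b_ib_j|$ of (4); conversely, given $\sigma$ as in (4) one recovers $\phi$ by solving the consistent sign system $\epsilon_{\sigma(k)}\epsilon_{\sigma(l)}=b_kb_l/(a_{\sigma(k)}a_{\sigma(l)})$ on the support.

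For (4) $\Rightarrow$ (1) the key geometric input is a tensoring move. Since $P(E)=P(E\otimes L)$ for any line bundle $L$, we have $M(\alpha)=P(\gamma^\mu\oplus\gamma^{\alpha+\mu})$ for every $\mu\in H^2(B_{n-1})$. If $\mu$ is chosen so that $c_2(\gamma^\mu\oplus\gamma^{\alpha+\mu})=\mu(\alpha+\mu)=0$, then Proposition~\ref{proposition:isomorphism of sum of two line bundle} applies, because both bundles then have $c_2=0$ and the same $c_1=\alpha+2\mu$, giving $\gamma^\mu\oplus\gamma^{\alpha+\mu}\cong\C\oplus\gamma^{\alpha+2\mu}$ and hence
\[
M(\alpha)\cong M(\alpha+2\mu)\qquad\text{whenever }\mu(\alpha+\mu)=0 .
\]
Writing $a_i'=a_i+2m_i$, the equation $\mu(\alpha+\mu)=0$ is precisely $a_i'a_j'=a_ia_j$ for all $i<j$, so this move alters the coefficients while preserving every parity and every pairwise product. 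The plan is then: apply the diffeomorphisms of $(\CP^1)^{n-1}$ realizing the permutation $\sigma$ and the sign changes (by permuting and conjugating the $\CP^1$-factors) to arrange $a_i\equiv b_i\bmod 2$ and $a_ia_j=b_ib_j$ on the common support, and then take $\mu=(\beta-\alpha)/2$, which is integral by the parity condition and satisfies $\mu(\alpha+\mu)=0$ by the matched products, to get $M(\alpha)\cong M(\beta)$. When the common support has at most one element the two manifolds are products of a Hirzebruch surface with copies of $\CP^1$, and they agree because the Hirzebruch type depends only on the parity of the single coefficient; the cases where the supports differ force the relevant coefficients to be even, so both sides are $(\CP^1)^n$.

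The crux is (2) $\Rightarrow$ (3). Given a ring isomorphism $\Psi$, I would analyze the square-zero elements: expanding $\Psi(x_i)^2=0$ for $i\le n-1$ in the $y$-basis forces $\Psi(x_i)$ to be either a single base generator $\pm y_k$ (with $k\le n-1$) or, only when $\beta$ has at most one nonzero coefficient and that coefficient is even, an element involving $y_n$. In the generic case — $\beta$ with two nonzero coefficients, or a single odd one, where primitivity of $\Psi(x_i)$ excludes the $y_n$-branch — each $\Psi(x_i)$ ($i\le n-1$) equals $\epsilon_i y_{\pi(i)}$ for a permutation $\pi$ of $\{1,\ldots,n-1\}$, and $\phi:=\Psi|_{\langle x_1,\ldots,x_{n-1}\rangle}$ is the sought automorphism. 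Writing $\Psi(x_n)=\eta+Q_ny_n$ with $\eta\in\langle y_1,\ldots,y_{n-1}\rangle$ and $Q_n=\pm1$, and feeding this into the top relation $\Psi(x_n)^2=\Psi(\alpha)\Psi(x_n)=\phi(\alpha)\Psi(x_n)$, the $y_n$-component gives $\phi(\alpha)=2\eta+Q_n\beta$ (so $\phi(\alpha)\equiv\beta\bmod 2$) while the base component gives $\eta(\eta+Q_n\beta)=0$, whence $\phi(\alpha^2)=\phi(\alpha)^2=(2\eta+Q_n\beta)^2=\beta^2$. The remaining degenerate target, $\beta$ zero or a single even coefficient, makes $M(\beta)\cong(\CP^1)^n$, so $H^\ast(M(\alpha))\cong H^\ast((\CP^1)^n)$; by the theorem of Masuda and Panov \cite{MP} then $M(\alpha)\cong(\CP^1)^n$, which forces all $a_i$ even (from the mod-$2$ squaring map) and $\alpha^2=0$, so (3) holds with $\phi=\mathrm{id}$. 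This casework, and especially the exclusion of the $y_n$-branch, is the step I expect to be the main obstacle.

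Finally, for the Pontrjagin statement I would first compute, from the stable splitting of the tangent bundle of a Bott manifold into line bundles with Chern roots $x_j$ and $x_j-f_j$, that
\[
p(M(\alpha))=\prod_{j=1}^{n}(1+x_j^2)\bigl(1+(x_j-f_j)^2\bigr)=1+\alpha^2 ,
\]
since each base factor is $1$ (as $x_j^2=0$ for $j\le n-1$) and the top pair collapses using $x_n^2=\alpha x_n$. In the generic case $\alpha$ lies in $\langle x_1,\ldots,x_{n-1}\rangle$ where $\Psi$ coincides with $\phi$, so $\Psi(\alpha^2)=\phi(\alpha)^2=\beta^2$ and hence $\Psi(p(M(\alpha)))=1+\beta^2=p(M(\beta))$; in the product case both total Pontrjagin classes equal $1$ by naturality under the diffeomorphism to $(\CP^1)^n$. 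Thus every isomorphism preserves total Pontrjagin classes, completing the theorem.
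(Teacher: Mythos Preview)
Your overall route—the tensoring move with Proposition~\ref{proposition:isomorphism of sum of two line bundle} for the diffeomorphism step, and the square-zero analysis for (2)$\Rightarrow$(3)—is essentially the paper's. There is, however, a genuine gap in your (2)$\Rightarrow$(3) when $\beta$ has exactly one nonzero coefficient and that coefficient is odd. Primitivity does \emph{not} exclude the $y_n$-branch there: take $\beta=y_1$ (any odd $b_1$ reduces to this since $M(b_1y_1)\cong M(y_1)$), and observe that $-y_1+2y_n$ is primitive with $(-y_1+2y_n)^2=y_1^2-4y_1y_n+4y_n^2=0$. More than that, the assignment $y_1\mapsto -y_1+2y_n$, $y_n\mapsto -y_1+y_n$, $y_j\mapsto y_j$ for $2\le j\le n-1$ is a bona fide graded ring automorphism of $H^\ast(M(\beta))$ (the relevant $2\times 2$ block has determinant $1$, and one checks directly that $\Psi(y_n)^2=\Psi(y_1)\Psi(y_n)=-y_1y_n$). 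So an isomorphism $\Psi$ can carry a base generator off the base, and your restriction $\phi:=\Psi|_{\langle x_1,\dots,x_{n-1}\rangle}$ need not land in $\langle y_1,\dots,y_{n-1}\rangle$.

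The paper avoids this by isolating \emph{all} $\beta$ supported on at most one coordinate via Lemma~\ref{lemma:rational coefficient cohomology}: these are precisely the $\beta$ with $H^\ast(M(\beta);\Q)\cong H^\ast((\CP^1)^n;\Q)$, and then $M(\beta)$ is either $(\CP^1)^n$ or $\mathcal H\times(\CP^1)^{n-2}$, so $\alpha$ too is supported on a single coordinate of the same parity and (3) follows with $\phi$ a coordinate permutation. The fix for your argument is simply to move the single-odd case into your degenerate branch alongside the single-even one. Nothing downstream is harmed—the Pontrjagin assertion survives because $\alpha^2=\beta^2=0$ in these cases—and your treatment of the genuinely generic case ($\beta$ with at least two nonzero coefficients) is correct and matches the paper's.
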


Before we prove the theorem let us note that
\begin{equation}\label{equation:cohomology of M(a)}
H^\ast(M(\alpha))=\mathbb Z[z_1,\ldots, x_{n-1}, y_\alpha]/<x_1^2, \ldots, x_{n-1}^2, y_\alpha^2 -\alpha y_\alpha>
\end{equation}
where $y_\alpha$ is the first Chern class of the tautological bundle of $P(\mathbb C\oplus\gamma^a)$. Moreover its total Pontrjagin class is
\begin{align}
    P(M(\alpha)) &= (1 + y_\alpha)^2(1+ (y_\alpha - \alpha)^2) \nonumber \\
    & = 1 + \alpha^2. \label{eqn : Pontrjagin}
\end{align}
We first need the following lemma.

\begin{lemma}\label{lemma:rational coefficient cohomology}
The following are equivalent.
\begin{enumerate}
\item $H^\ast(M(\alpha)\colon \mathbb Q)\cong H^\ast((\mathbb CP^1)^n\colon \mathbb Q)$.
\item There is an element $u\in H^\ast(B_{n-1}\colon \mathbb Q)$ such that $(y_\alpha+u)^2=0$ in $H^\ast(M(\alpha)\colon\Q)$.
\item $\alpha=a_ix_i$ for some $i=1,\ldots, n-1$.
\end{enumerate}
Moreover there are two diffeomorphism types in this case, and $H^\ast(M(\alpha))\cong H^\ast((\mathbb CP^1)^n)$
if and only if $a_i$ is even in (3) above.
\end{lemma}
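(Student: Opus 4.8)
The plan is to base everything on the squaring map $q\colon H^2\to H^4$, $q(z)=z^2$, which is canonically attached to the graded ring and hence respected by any graded ring isomorphism. Writing a general degree-two class of $M(\alpha)$ as $z=\sum_{i=1}^{n-1}c_ix_i+dy_\alpha$ and using the relations $x_i^2=0$ and $y_\alpha^2=\alpha y_\alpha=\sum_i a_ix_iy_\alpha$, I would first record the single identity
$$z^2=\sum_{i<j}2c_ic_j\,x_ix_j+\sum_{i=1}^{n-1}\bigl(2dc_i+d^2a_i\bigr)x_iy_\alpha,$$
so that $z^2=0$ forces $c_ic_j=0$ for all $i<j$ (at most one $c_i$ nonzero) together with $2dc_i+d^2a_i=0$ for every $i$. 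This computation controls the whole lemma.

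For the equivalences I would run the cycle $(3)\Rightarrow(2)\Rightarrow(1)\Rightarrow(3)$. For $(3)\Rightarrow(2)$, if $\alpha=a_ix_i$ take $u=-\tfrac{a_i}{2}x_i$; then $(y_\alpha+u)^2=y_\alpha^2-a_ix_iy_\alpha=0$ over $\mathbb Q$. For $(2)\Rightarrow(1)$, set $w=y_\alpha+u$; since $w^2=0$, the classes $x_1,\dots,x_{n-1},w$ generate $H^\ast(M(\alpha)\colon\mathbb Q)$ with all squares zero, and a dimension count (both sides have rank $2^n$) identifies the ring with $H^\ast((\mathbb CP^1)^n\colon\mathbb Q)$. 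For $(1)\Rightarrow(3)$ I would count the lines of square-zero classes: by the displayed identity this locus is always a finite union of lines, consisting of the $n-1$ lines $\mathbb Q x_i$ together with exactly one extra line precisely when $\alpha$ has at most one nonzero coefficient (if two coefficients $a_i,a_j$ are nonzero, then $d\neq0$ forces both $c_i,c_j\neq0$, contradicting $c_ic_j=0$, while $d=0$ gives only the lines $\mathbb Q x_i$). Since $H^\ast((\mathbb CP^1)^n\colon\mathbb Q)$ has exactly $n$ such lines and this number is an isomorphism invariant, (1) forces $\alpha=a_ix_i$.

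For the final integral assertion I would split on the parity of $a_i$. If $a_i$ is even, then $u=-\tfrac{a_i}{2}x_i$ already lies in $H^2(B_{n-1}\colon\mathbb Z)$, so $w=y_\alpha+u$ is an integral class with $w^2=0$ and $\{x_1,\dots,x_{n-1},w\}$ is a $\mathbb Z$-basis of $H^2$ realizing the product presentation; hence $H^\ast(M(\alpha))\cong H^\ast((\mathbb CP^1)^n)$ over $\mathbb Z$. If $a_i$ is odd, I would pass to $\mathbb Z_2$-coefficients, where $(z+z')^2=z^2+z'^2$ makes the squaring map $\mathbb Z_2$-linear; the identity above reduces to $z^2=d\,x_iy_\alpha$, so its kernel is the hyperplane $d=0$ of dimension $n-1$, whereas in $H^\ast((\mathbb CP^1)^n\colon\mathbb Z_2)$ every degree-two class squares to zero and the kernel has dimension $n$. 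Since this dimension is preserved by any graded ring isomorphism, the integral rings cannot be isomorphic when $a_i$ is odd. Finally, to count diffeomorphism types I would note that $\gamma^{a_ix_i}$ is pulled back from $\mathcal O(a_i)$ on the $i$-th factor, so $M(a_ix_i)\cong(\mathbb CP^1)^{n-2}\times P(\mathbb C\oplus\mathcal O(a_i))$, a product of $\mathbb CP^1$'s with a Hirzebruch surface whose diffeomorphism type depends only on the parity of $a_i$.

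The main obstacle is precisely the integral part. The rational invariant (the number of square-zero lines) cannot separate even from odd $a_i$, since both yield $n$ lines, so condition (1) holds in either case; the entire content of the "moreover" lies in the $\mathbb Z_2$-refinement, where the dimension of the kernel of the now-linear squaring map drops from $n$ to $n-1$ exactly when the single relevant coefficient is odd. Isolating this kernel dimension as the correct finer invariant, and verifying that it is genuinely preserved under graded ring isomorphism, is where the real work lies.
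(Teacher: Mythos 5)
Your proposal is correct, and its core computation --- expanding $z^2$ for $z=\sum c_ix_i+dy_\alpha$ and reading off $c_ic_j=0$ and $2dc_i+d^2a_i=0$ --- is exactly the engine of the paper's proof. The differences are organizational and in two places substantive. The paper runs $(1)\Rightarrow(2)\Rightarrow(3)\Rightarrow(1)$: it gets $(1)\Rightarrow(2)$ by noting that $H^2$ must contain $n$ linearly independent square-zero classes, extracts $u$ by normalizing the $y_\alpha$-coefficient, derives $(3)$ from $d_id_j=0$ and $2\sum d_ix_i+\alpha=0$, and closes the loop by identifying $M(a_ix_i)$ with $\mathcal H\times(\CP^1)^{n-2}$ or $(\CP^1)^n$ and completing the square in $H^\ast(\mathcal H;\Q)$. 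You instead close the loop with $(1)\Rightarrow(3)$ directly by counting the lines in the square-zero locus ($n$ versus $n-1$) as a ring-isomorphism invariant, and you prove $(2)\Rightarrow(1)$ by a surjection-plus-dimension count rather than by exhibiting the diffeomorphism type; both are clean and equivalent in content. Where your write-up genuinely adds something is the ``only if'' half of the integral statement: the paper deduces it from the well-known diffeomorphism classification of Hirzebruch surfaces without spelling out why $H^\ast(\mathcal H;\Z)\not\cong H^\ast((\CP^1)^2;\Z)$, whereas your $\Z_2$-reduction --- the squaring map becomes linear and the dimension of its kernel drops from $n$ to $n-1$ exactly when $a_i$ is odd --- is a self-contained cohomological proof of that nonisomorphism. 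Your identification of this $\Z_2$-kernel dimension as the finer invariant is also precisely the kind of mod-$2$ reasoning the paper exploits elsewhere (e.g.\ in Section~\ref{sectoin:BQ-algebra over Z_2}), so it fits the spirit of the article well.
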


\begin{proof}
(1)$\Rightarrow$(2) Since there are $n$ linearly independent elements in the vector space
$H^2((\mathbb CP^1)^n\colon \mathbb Q)$
whose squares are zero, so are $H^2(M(\alpha)\colon \mathbb Q)$. From (\ref{equation:cohomology of M(a)})
there are $n-1$ linearly independent elements $x_1,\ldots, x_{n-1}$ in $H^2(M(\alpha)\colon\mathbb Q)$
whose squares are zero. Thus there is one more linearly independent element
$w=\sum_{i=1}^{n-1}c_{i}x_i+c_n y_\alpha\in H^2(M(\alpha)\colon\mathbb Q)$ such that $w^2=0$.
Since $w$ is linearly independent from $x_1,\ldots, x_{n-1}$, the coefficient $c_n$ of $y_\alpha$
is non-zero. Let $u=c_n^{-1}(\sum_{i=1}^{n-1}c_ix_i)\in H^2(B_{n-1}\colon \mathbb Q)$.
Then $(y_\alpha+u)^2=(c^{-1}_n)^2w^2=0$.

(2)$\Rightarrow$(3)
Let $u=\sum_{i=1}^{n-1}d_i x_i$ such that $(y_\alpha+u)^2=0$.
Then

\begin{align*}
0 & = (y_\alpha+\sum_{i=1}^{n-1}d_i x_i)^2\\
 & = y_\alpha^2+2\sum_{i<j}^{n-1}d_i d_j x_i x_j + 2\sum_{i=1}^{n-1}d_i x_iy_a\\
 & = 2\sum_{i<j}^{n-1}d_i d_j x_i x_j + (2\sum_{i=1}^{n-1}d_i x_i +\alpha)y_\alpha.
\end{align*}

This implies that $d_i d_j=0$ for all $i\ne j$, and $2\sum_{i=1}^{n-1}d_i x_i + \alpha=0$.
From the first condition at most one, say $d_i$ is non-zero.
From the second condition we have $0=2 d_ix_i + \alpha$. If we set $a_i=-2d_i$, then (3) follows.

(3)$\Rightarrow$(1)
If $\alpha=a_ix_i$, then $M(\alpha)$ is diffeomorphic to $B_2\times (\mathbb CP^1)^{n-2}$ where
$B_2=P(\mathbb C\oplus\gamma^{a_i})\to \mathbb CP^1$.
Here $\gamma$ is the tautological line bundle over
$\mathbb CP^1$. But it is well-known that there are exactly two diffeomorphism type of $B_2$
depending on the parity of $a_i$. Namely, if $a_i$ is even, then $B_2\cong (\mathbb CP^1)^2$
and if $a_i$ is odd, then $B_2$ diffeomorphic to a Hirzebruch surface $\mathcal H$.
In the former case, $H^\ast(M(\alpha)\colon \mathbb Q)$ is trivially isomorphic to $H^\ast((\mathbb CP^1)^n\colon\mathbb Q)$,
and in the latter case
\begin{align*}
H^\ast(\mathcal H\colon \mathbb Q)& \cong \mathbb Q[x_1, x_2]/<x_1^2, x_2^2-x_1x_2>\\
& \cong \mathbb Q[x_1, x_2]/<x_1^2, (x_2-\frac12x_1)^2>\\
& \cong H^\ast(\mathbb (CP^1)^2\colon \mathbb Q),
\end{align*}
which proves the lemma.
\end{proof}

We now prove Theorem~\ref{theorem:rigidity of one-twist Bott tower}
\begin{proof}[Proof of Theorem~\ref{theorem:rigidity of one-twist Bott tower}]

(2)$\Rightarrow$(3)
Let $\phi\colon H^\ast(M(\alpha))\to H^\ast(M(\beta))$ be an isomorphism.
In the case when $H^\ast(M(\alpha)\colon \mathbb Q)\cong H^\ast((\mathbb CP^1)^n\colon \mathbb Q)$,
Lemma~\ref{lemma:rational coefficient cohomology} shows that there are only two diffeomorphism types for $M(\alpha)$,
which are $(\mathbb CP^1)^n$ and $\mathcal H\times (\mathbb CP^1)^{n-2}$ where $\mathcal H$ is the
Hirzebruch surface. For these two types we can see easily that (2)$\Rightarrow$(3).

Therefore we may assume that $H^\ast(M(\alpha)\colon \mathbb Q)\cong H^\ast(M(\beta)\colon \mathbb Q)$ is not
isomorphic to $H^\ast((\mathbb CP^1)^n\colon \mathbb Q)$.
For each $x_i\in H^2(B_{n-1})\subset H^2(M(\alpha))$ we have $\phi(x_i)^2=0$ in $H^\ast(M(\beta))$.
On the other hand since $x_1, \ldots, x_{n-1}, y_\beta$ are generators of $H^\ast(M(\beta))$ we can
write $\phi(x_i)=b_{i1}x_1+\cdots + b_{i n-1}x_{n-1}+b_{in}y_\beta$. Then
by Lemma~\ref{lemma:rational coefficient cohomology} the coefficient $b_{in}$ of $y_\beta$ must vanish for $i=1,\ldots, n-1$.
This means that any isomorphism $\phi\colon H^\ast(M(\alpha))\to H^\ast(M(\beta))$ must preserve
the subring $H^\ast(B_{n-1})$. Therefore $\phi(y_\alpha)=\pm y_\beta+w$ for some $w\in H^2(B_{n-1})$.
If necessary, by composing $\phi$ with an automorphism  of $H^\ast(M(\beta))$ fixing $H^\ast(B_{n-1})$ and sending
$y_\beta$ to $-y_\beta$, we may assume that $\phi(y_\alpha)=y_\beta+w$.
It follows that
\begin{equation}\label{equation:phi-1}
\phi(y_\alpha^2)=(y_\beta+w)^2=y_\beta^2+2w y_\beta+w^2=(\beta+2w)y_\beta+w^2.
\end{equation}
On the other hand we have
\begin{equation}\label{equation:phi-2}
\phi(y_\alpha^2)=\phi(\alpha y_\alpha)=\phi(\alpha)(y_\beta+w).
\end{equation}
Comparing (\ref{equation:phi-1}) and (\ref{equation:phi-2}), we obtain
\begin{equation}\label{equation:phi-3}
\phi(\alpha)=\beta + 2w \quad\textrm{ and} \quad w^2 = \phi(\alpha)w.
\end{equation}
The first equation of (\ref{equation:phi-3}) implies $\phi(\alpha)\equiv \beta$ mod $2$.
By plugging the first equation into the second of (\ref{equation:phi-3}) we can see that
$\beta w=-w^2$. Hence $\phi(\alpha^2)=(\beta+2w)^2=\beta^2+4\beta w+4w^2=\beta^2$. Hence (2)$\Rightarrow$(3) is proved.

(3)$\Rightarrow$(2)
Suppose there is an automorphism $\phi$ on $H^\ast(B_{n-1})$ such that
$\phi(\alpha)\equiv \beta$ mod $2$ and $\phi(\alpha^2)=\beta^2$.
Let $\phi(\alpha)=\beta+2w$ for some $w\in H^2(B_{n-1})$.
If we define $\phi(y_\alpha)=y_\beta+w$, then we can see easily that $\phi$ defines
an isomorphism from $H^\ast(M(\alpha))$ to $H^\ast(M(\beta))$. This proves (3)$\Rightarrow$(2).

(1)$\Rightarrow$(2) This implication is obvious.

(2)$\Rightarrow$(1)
Suppose $H^\ast(M(\alpha))$ is isomorphic to $H^\ast(M(\beta))$. From the implication
(2)$\Rightarrow$(3), there is an automorphism $\phi$ on $H^\ast(B_{n-1})\cong \mathbb Z[x_1,\ldots,x_{n-1}/<x_j^2 \mid j=1,\ldots, n-1>$. But it is easy to see that any automorphism on
$\mathbb Z[x_1,\ldots,x_{n-1}/<x_j^2 \mid j=1,\ldots, n-1>$ is generated by a permutation on the
generators $x_1, \ldots, x_{n-1}$ and possibly changing their signs. Such automorphism on
the ring $H^\ast(B_{n-1})$ is clearly induced by a self-diffeomorphism $f$ on $B_{n-1}=(\mathbb CP^1)^{n-1}$, i.e.,
$f^\ast=\phi$. The diffeomorphism $f$ induces a fiber bundle isomorphism between $\gamma^\alpha$ and $f^\ast(\gamma^\alpha)$, hence
it induces a diffeomorphism between $M(\alpha)$ and $M(\phi(\alpha))$. Therefore for simplicity
we may assume the automorphism  $\phi$ on $H^\ast(B_{n-1})$ is the identity, such that $\alpha \equiv \beta$ mod $2$ and $\alpha^2=\beta^2$.

Since $\alpha \equiv \beta$ mod $2$, there is an element $w\in H^2(B_{n-1})$ such that $2w=\alpha-\beta$.
Now let $\xi_1=\gamma^\alpha \oplus\mathbb C$ and $\xi_2=\gamma^{w}(\gamma^\beta \oplus\mathbb C)$. Then their first Chern classes are equal because
$c_1(\xi_1)=\alpha=\beta+2w=c_1(\xi_2)$. Their  second Chern classes are $c_2(\xi_1)=0$ and $c_2(\xi_2)=w(\beta+w)=0$ which follows from (\ref{equation:phi-3}). Therefore $\xi_1\cong\xi_2$ by Proposition~\ref{proposition:isomorphism of sum of two line bundle}, and hence $M(\alpha)=P(\xi_1)\cong P(\xi_2)=P(\gamma^{w}(\gamma^\beta \oplus\mathbb C))\cong P(\gamma^\beta \oplus\mathbb C)=M(\beta)$. This proves (2)$\Rightarrow$(1).

That (3)$\Leftrightarrow$(4) is obvious. If $\phi \colon H^\ast(M(\alpha)) \to H^\ast(M(\beta))$ is any isomorphism, the proof (2)$\Rightarrow$(3) shows that $\phi(\alpha^2) = \beta^2$. Hence by the identity \eqref{eqn : Pontrjagin} the isomorphism $\phi$ preserves the Pontrjagin classes of $M(\alpha)$ and $M(\beta)$.
\end{proof}

By putting all the results together we can conclude the following cohomological rigidity result
for quasitoric manifolds.

\begin{theorem}\label{theorem:final theorem}
Let $M$ and $N$ be $2n$-dimensional quasitoric manifolds whose cohomologies are BQ-algebra of rank
$n$ over $\Z$ with cohomological complexities equal to $1$.
If $H^\ast(M)\cong H^\ast(N)$, then $M$ and $N$ are homeomorphic.
\end{theorem}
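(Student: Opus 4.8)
Theorem~\ref{theorem:final theorem} asserts cohomological rigidity for $2n$-dimensional quasitoric manifolds whose cohomology rings are BQ-algebras of rank $n$ over $\Z$ with cohomological complexity $1$. The plan is to combine the three major results already established in the excerpt, each of which handles one link in a chain of implications running from the ring isomorphism back to a homeomorphism. The overarching strategy is: start from $H^\ast(M)\cong H^\ast(N)$, use Theorem~\ref{theorem:BQ-algebra and Bott tower} to replace the abstract quasitoric manifolds by genuine Bott manifolds (up to homeomorphism), use Theorem~\ref{theorem:twist number = complexity} together with Corollary~\ref{corollary:last twist} to recognize these as one-twist Bott manifolds in standard form, and finally invoke Theorem~\ref{theorem:rigidity of one-twist Bott tower} to conclude diffeomorphism, hence homeomorphism.

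First I would use the hypothesis that $H^\ast(M)$ and $H^\ast(N)$ are BQ-algebras of rank $n$ over $\Z$. By Theorem~\ref{theorem:BQ-algebra and Bott tower}, this forces each of $M$ and $N$ to be equivalent to an $n$-stage Bott manifold; in particular $M$ is homeomorphic to some Bott manifold $B$ and $N$ to some Bott manifold $B'$, with $H^\ast(B)\cong H^\ast(M)\cong H^\ast(N)\cong H^\ast(B')$ as graded rings. It therefore suffices to prove that the two Bott manifolds $B$ and $B'$ are homeomorphic whenever their cohomology rings are isomorphic, under the standing assumption that the cohomological complexity equals $1$.

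Next I would bring in the complexity hypothesis. By Theorem~\ref{theorem:twist number = complexity} the twist number of any Bott tower structure equals the cohomological complexity, so both $B$ and $B'$ carry Bott tower structures of twist number exactly $1$; that is, they are one-twist Bott manifolds. By Corollary~\ref{corollary:last twist} each admits a Bott tower structure whose only nontrivial fibration is the top one, so that $B_{n-1}\cong(\CP^1)^{n-1}$ and $B=M(\alpha)$, $B'=M(\beta)$ in the notation preceding Theorem~\ref{theorem:rigidity of one-twist Bott tower}, for suitable $\alpha,\beta\in H^2((\CP^1)^{n-1})$. Since $H^\ast(M(\alpha))\cong H^\ast(M(\beta))$ by construction, the equivalence (2)$\Rightarrow$(1) of Theorem~\ref{theorem:rigidity of one-twist Bott tower} yields that $M(\alpha)$ and $M(\beta)$ are diffeomorphic, hence $B$ and $B'$ are diffeomorphic and in particular homeomorphic. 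Composing with the homeomorphisms $M\cong B$ and $N\cong B'$ completes the argument.

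The genuinely substantive mathematics lives entirely inside the three cited theorems, so the only real obstacle in assembling the final statement is a bookkeeping one: one must check that the cohomological complexity hypothesis transfers cleanly from the quasitoric manifolds $M,N$ to the Bott manifolds $B,B'$ furnished by Theorem~\ref{theorem:BQ-algebra and Bott tower}. This is immediate because cohomological complexity is an invariant of the isomorphism class of the graded cohomology ring, and the identifications $H^\ast(M)\cong H^\ast(B)$ and $H^\ast(N)\cong H^\ast(B')$ are isomorphisms of graded rings; thus complexity $1$ for $M$ and $N$ gives complexity $1$ for $B$ and $B'$. With that observation the proof is simply the concatenation of the three results, and no new computation is required.
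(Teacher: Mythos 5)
Your proposal is correct and follows essentially the same route as the paper: pass from the quasitoric manifolds to Bott manifolds via Theorem~\ref{theorem:BQ-algebra and Bott tower}, use Theorem~\ref{theorem:twist number = complexity} and Corollary~\ref{corollary:last twist} to realize them as one-twist Bott manifolds $M(\alpha)$, $M(\beta)$, and conclude with Theorem~\ref{theorem:rigidity of one-twist Bott tower}. Your explicit remark that the complexity hypothesis transfers because it is a graded-ring invariant is a useful bookkeeping step that the paper's own (terser) proof leaves implicit.
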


\begin{proof}
By Theorem~\ref{theorem:twist number = complexity} and Corollary~\ref{coro:well definedness of twist number}
both $M$ and $N$ are equivalent to one-twist $n$-stage Bott manifolds.
By Theorem~\ref{theorem:rigidity of one-twist Bott tower} those one-twist Bott manifolds are diffeomorphic.
Hence $M$ and $N$ are homeomorphic.
\end{proof}

\section{BQ-algebra over $\Z_{(2)}$} \label{sectoin:BQ-algebra over Z_2}

All the results in previous sections are concerned with BQ-algebras over $\Z$. In this section we remark that these results are stil true for BQ-algebras over the localized ring $\Z_{(2)}$ at $2$.

A BQ-algebra over $\Z$ is defined in Definition~\ref{def:BQ-algebra}. However this definition can be extended to any commutative ring $R$. Namely, a BQ-algebra $S$ of rank $n$ over $R$ is a graded $R$-algebra with generators $x_1, \ldots, x_n$ of degree $2$ such that
\begin{enumerate}
\item $x_k^2=\sum_{i<k}\c{k}{i}x_ix_k$ where $\c{k}{i}\in R$ for $1\le k\le n$, (in particular $x_1^2=0$,) and
\item $\prod_{i=1}^n x_i\ne 0$.
\end{enumerate}
The $R$-complexity of $S$ is the number of $k$'s such that $x_k^2 \neq 0$ in the above condition (1) for all possible choices of generator sets $\{x_1, \ldots, x_n\}$. Note that the cohomology ring $H^\ast(M,R)$ of a quasitoric manifold $M$ is a BQ-algebra over $R$. If $R = \Z$ and $M$ is a Bott manifold, the cohomological complexity of $M$ defined in Section~\ref{section: twist number} is the $\Z$-complexity of $H^\ast(M\colon \Z)$.

In Theorem~\ref{theorem:twist number = complexity} we show that the twist number of a Bott manifold $M$ is equal to the cohomological complexity of $M$. If we examine the proof carefully, the proof is based on arguments whether the coefficients are even or odd. Therefore we can see easily that the same argument works if the integer coefficients are replaced by the localized ring $\Z_{(2)}$ at $2$. Therefore Theorem~\ref{theorem:twist number = complexity} and Corollary~\ref{coro:well definedness of twist number} can be extended as follows.

\begin{theorem} \label{thm:twistnumber_local2}
Let $M$ be a Bott manifold. Then the twist number of $M$ is well-defined and is equal to the $\Z_{(2)}$-complexity of the BQ-algebra $H^\ast(M\colon \Z_{(2)})$. In particular, the $\Z$-complexity of $H^\ast(M\colon \Z)$ is equal to the $\Z_{(2)}$-complexity of $H^\ast(M\colon \Z_{(2)})$.
\end{theorem}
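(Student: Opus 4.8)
The plan is to repeat the proof of Theorem~\ref{theorem:twist number = complexity} essentially verbatim, reading every occurrence of ``even'' and ``odd'' through the localization $\Z_{(2)}$. The point that makes this work is that $\Z_{(2)}$ is a local ring with maximal ideal $2\Z_{(2)}$ and residue field $\Z_{(2)}/2\Z_{(2)}\cong\Z_2$, so that ``$x$ is even'' now means $x\in 2\Z_{(2)}$ and ``$x$ is odd'' means that $x$ is a unit; on the subring $\Z\subset\Z_{(2)}$ these notions restrict to the usual parity of integers. Writing $t$ for the twist number of $M$ (well defined by Corollary~\ref{coro:well definedness of twist number}) and $s$ for the $\Z_{(2)}$-complexity of $H^\ast(M\colon\Z_{(2)})$, the inequality $t\ge s$ is immediate, since a Bott tower structure with $t$ nontrivial stages furnishes a $\Z_{(2)}$-BQ-presentation with exactly $t$ nonzero relations, and $s$ minimizes over all $\Z_{(2)}$-generator sets. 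Thus the whole content is again the reverse inequality, which I would prove by assuming $t>s$ and lowering the twist number to reach a contradiction.

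To carry this out I would fix a $\Z_{(2)}$-algebra isomorphism $\psi$ onto a complexity-$s$ presentation, record the change-of-generators matrix $B=(b_{ij})$, and note that now $\det B$ is a \emph{unit} of $\Z_{(2)}$ rather than $\pm1$; this is all the argument ever uses about $\det B$. The relations \eqref{eqn:8} and \eqref{eqn:9} are derived exactly as before. The block-determinant lemma survives unchanged: if a $k\times\ell$ block of an $n\times n$ matrix over $\Z_{(2)}$ has all entries in $2\Z_{(2)}$ and $k+\ell>n$, then every term of the Leibniz expansion meets that block, so the determinant lies in the ideal $2\Z_{(2)}$ and cannot be a unit; this again forces some $b_{\ell m}$ with $n-t+1\le m\le n$ to be a unit. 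The parity step ``$2b_{km}b_{kh}=-(b_{km})^2\c{m}{h}$ with $\c{m}{h}$ odd forces $b_{km}$ even'' becomes the statement that, reducing modulo $2\Z_{(2)}$, we get $(b_{km})^2\equiv 0$ in the \emph{field} $\Z_2$, whence $b_{km}\in2\Z_{(2)}$. Moreover, since $b_{\ell m}$ is a unit we may divide by it inside $\Z_{(2)}$ itself, so the computation of $(f_m/2)^2$ stays in $H^\ast(\,\cdot\,\colon\Z_{(2)})$ and never needs to pass to $\Q$.

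The one step that deserves genuine care — and which I expect to be the main obstacle — is the closing reduction, because it is effected by Proposition~\ref{proposition:trivial bundle}, a statement about honest complex bundles with \emph{integral} Chern classes. Here I would exploit that $f_m$ is a first Chern class and therefore already lies in $H^2(B_{m-1}\colon\Z)$, and that $H^\ast(B_{m-1}\colon\Z)$ is a free $\Z$-module, so it injects into its localization $H^\ast(B_{m-1}\colon\Z)\otimes\Z_{(2)}=H^\ast(B_{m-1}\colon\Z_{(2)})$. Consequently the two $\Z_{(2)}$-conclusions of the claim descend to integral ones: an integer lies in $2\Z_{(2)}$ iff it is even, so $f_m\equiv0 \bmod 2$ over $\Z_{(2)}$ gives $f_m\in 2H^2(B_{m-1}\colon\Z)$; and the integral class $f_m^2$ vanishes over $\Z_{(2)}$ iff it vanishes over $\Z$. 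With these integral statements in hand, Proposition~\ref{proposition:trivial bundle} applies exactly as in the original proof and lowers the twist number, contradicting $t>s$. This yields $t=s$, i.e. the twist number equals the $\Z_{(2)}$-complexity. Finally, combining this with Theorem~\ref{theorem:twist number = complexity}, which identifies the same twist number $t$ with the $\Z$-complexity, gives the ``in particular'' assertion that the $\Z$- and $\Z_{(2)}$-complexities of $H^\ast(M)$ coincide.
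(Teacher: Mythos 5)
Your proposal is correct and follows exactly the route the paper intends: the paper's own justification is simply the remark that the proof of Theorem~\ref{theorem:twist number = complexity} only ever uses parity of coefficients and the invertibility of $\det B$, so it transfers verbatim to $\Z_{(2)}$. Your write-up is in fact more careful than the paper at the one delicate point — descending the conclusions $f_m\equiv 0\bmod 2$ and $f_m^2=0$ from $\Z_{(2)}$ back to $\Z$ (via freeness of $H^\ast(B_{m-1}\colon\Z)$ and $2\Z_{(2)}\cap\Z=2\Z$) so that Proposition~\ref{proposition:trivial bundle} applies to an honest integral Chern class — but this is a refinement of the same argument, not a different one.
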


In Theorem~\ref{theorem:BQ-algebra and Bott tower}, it is shown that if $M$ is a quasitoric manifold whose integral cohomology ring is a BQ-algebra over $\Z$, then $M$ is equivalent to a Bott manifold. In its proof, the only place where the property of integral coefficients different from that of rational coefficients is used is where $a_{st}a_{ts} = 2$ implies $a_{st}=\pm1$ and $a_{ts}=\pm2$ right after equation~(\ref{equation:2b1sb1t}).
But this is still true if the coefficient ring is $\mathbb Z_{(2)}$, the integer ring localized at $2$.
Therefore Theorem~\ref{theorem:BQ-algebra and Bott tower} is still true if the coefficient ring is $\mathbb Z_{(2)}$. Therefore Theorem~\ref{theorem:BQ-algebra and Bott tower} can be extended as follows.
\begin{theorem}\label{thm:local2,BQ-algebra vs Bott tower}
Let $M$ be a $2n$-dimensional quasitoric manifold over $P$, and
let $A$ be the characteristic matrix of $M$.
Then the following are equivalent.
\begin{enumerate}
\item $M$ is equivalent to an $n$-stage Bott manifold.
\item $H^\ast(M \colon \Z)$ is a BQ-algebra of rank $n$ over $\Z$.
\item $H^\ast(M:\mathbb Z_{(2)})$ is a BQ-algebra of rank $n$ over $\mathbb Z_{(2)}$.
\item $P$ is combinatorially equivalent to the cube $I^n$ and  $A$ is an
$n\times n$ matirx conjugate
to an upper triangular matrix by a permutation matrix.
\end{enumerate}
\end{theorem}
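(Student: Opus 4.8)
The plan is to prove Theorem~\ref{thm:local2,BQ-algebra vs Bott tower} by reducing everything to the integral version Theorem~\ref{theorem:BQ-algebra and Bott tower}, whose proof we re-examine with attention to which integrality facts are actually used. The implications (1)$\Rightarrow$(2) and (4)$\Leftrightarrow$(1) require no change: statement (1) is purely geometric, statement (4) is the combinatorial description of the characteristic data, and their equivalence follows from Proposition~3.2 of \cite{MP} exactly as before. The implication (2)$\Rightarrow$(3) is also immediate, since localization $\Z\to\Z_{(2)}$ is a ring homomorphism: if $H^\ast(M\colon\Z)$ is a BQ-algebra then tensoring the defining relations and the nonvanishing condition $\prod x_i\neq 0$ with $\Z_{(2)}$ (a flat, hence faithfully flat in degree considerations, torsion-free extension) yields a BQ-algebra over $\Z_{(2)}$ on the same generators.

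The substantive work is (3)$\Rightarrow$(4), and here I would simply transcribe the proof of (2)$\Rightarrow$(3) from Theorem~\ref{theorem:BQ-algebra and Bott tower} with $\Z$ replaced by $\Z_{(2)}$ throughout, flagging each step where integrality entered. First I would invoke \cite[Theorem~5.5]{MP} (or \cite[Theorem~1.6]{CPS}) to conclude $P\cong I^n$ from the fact that $H^\ast(M\colon\Z_2)$ is a BQ-algebra over $\Z_2$ — note $\Z_{(2)}$ reduces mod $2$ to $\Z_2$, so this input is unaffected. Then the characteristic matrix argument proceeds verbatim: the conditions on the characteristic map still force all principal $2\times 2$ minors to be $\pm1$, giving $1-\a{j}{i}\a{i}{j}=\pm1$, hence $\a{j}{i}\a{i}{j}\in\{0,2\}$. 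The two claims — that $\a{j}{i}\a{i}{j}=0$ for all $i\neq j$, and that all principal minors equal $1$ — are established by the same coefficient-comparison computations in equations \eqref{equation:2b1sb1t}, \eqref{equation:b2jyj} and \eqref{equation:2bijib1ji+1}, together with the determinant nonvanishing $\det B=\pm1$. Crucially, $\det B$ is a unit in $\Z_{(2)}$, so the contradiction "$\det B=0$" in each claim remains a genuine contradiction.

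The single point that genuinely needs the ring-theoretic input, and the one I expect to be the main obstacle, is the step flagged in the paragraph preceding the theorem: the deduction that $\a{s}{t}\a{t}{s}=2$ forces $(\a{t}{s},\a{s}{t})=\pm(1,2)$ or $\pm(2,1)$. Over $\Z$ this is just the factorization of $2$; over $\Z_{(2)}$ one must check that $2$ still factors only as a unit times $2$, which holds because the units of $\Z_{(2)}$ are exactly the fractions with odd numerator and denominator, and $2$ is the unique non-unit prime — so up to units the only factorization of $2$ in $\Z_{(2)}$ into two non-units-times-units is again $\{1,2\}$ type. Consequently equation~\eqref{equation:2b1sb1t} again reduces to a sum of squares forcing $b_{1s}=b_{1t}=0$, and the rest of the argument carries through. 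I would therefore conclude (3)$\Rightarrow$(4), closing the cycle of equivalences, and remark that no other place in the proof of Theorem~\ref{theorem:BQ-algebra and Bott tower} distinguishes $\Z$ from $\Z_{(2)}$, every remaining step being an equality of integers or a parity statement that is insensitive to inverting odd primes.
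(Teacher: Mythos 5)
Your proposal is correct and follows essentially the same route as the paper: the paper's own justification is precisely to rerun the proof of Theorem~\ref{theorem:BQ-algebra and Bott tower} over $\Z_{(2)}$, observing that the only step sensitive to the coefficient ring is the deduction $\a{s}{t}\a{t}{s}=2\Rightarrow(\a{t}{s},\a{s}{t})=\pm(1,2)$ or $\pm(2,1)$ after equation~\eqref{equation:2b1sb1t}, which survives because $2$ remains (up to units) the unique prime of $\Z_{(2)}$. Your writeup is in fact more detailed than the paper's, which disposes of the remaining implications and the unit status of $\det B$ without comment.
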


If we examine the proof of Theorem~\ref{theorem:rigidity of one-twist Bott tower} carefully, we can also see that a similar proof works for the following claim: if $M(\alpha)$ and $M(\beta)$ are one-twist Bott manifols with $H^\ast(M(\alpha)\colon \Z_{(2)}) \cong H^\ast(M(\beta)\colon\Z_{(2)})$, then they are diffeomorphic. So combining this claim together with Theorems~\ref{thm:twistnumber_local2} and \ref{thm:local2,BQ-algebra vs Bott tower}, we can have the following theorem.
\begin{theorem}
Let $M$ and $N$ be $2n$-dimensional quasitoric manifolds whose cohomologies are BQ-algebra of rank $n$ over $\Z_{(2)}$ with $\Z_{(2)}$-complexities less than or equal to $1$. If $H^\ast(M\colon \Z_{(2)}) \cong H^\ast(N \colon \Z_{(2)})$, then $M$ and $N$ are homeomorphic.
\end{theorem}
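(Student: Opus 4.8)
The plan is to reproduce the proof of Theorem~\ref{theorem:final theorem} essentially verbatim, with $\Z$ replaced by $\Z_{(2)}$ throughout and each integral input replaced by its $2$-local counterpart established in this section. First I would feed the two hypotheses---that $H^\ast(M\colon\Z_{(2)})$ and $H^\ast(N\colon\Z_{(2)})$ are BQ-algebras of rank $n$ over $\Z_{(2)}$---into Theorem~\ref{thm:local2,BQ-algebra vs Bott tower}. Its equivalence $(3)\Leftrightarrow(1)$ immediately yields that both $M$ and $N$ are equivalent to $n$-stage Bott manifolds; in particular each is homeomorphic to a Bott manifold.

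Next I would pin down the twist numbers. By Theorem~\ref{thm:twistnumber_local2} the twist number of each of these Bott manifolds is well-defined and equals the $\Z_{(2)}$-complexity of its $2$-local cohomology. Since the two cohomology rings are isomorphic they share the same $\Z_{(2)}$-complexity, and by hypothesis this common value is at most $1$. Hence both $M$ and $N$ are, up to equivalence, one-twist (or trivial) Bott manifolds. Applying Corollary~\ref{corollary:last twist} I may push the single nontrivial stage to the top and write them as $M(\alpha)$ and $M(\beta)$ for suitable $\alpha,\beta\in H^2((\CP^1)^{n-1})$; the trivial case $(\CP^1)^n$ is handled identically and needs no separate comment.

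Finally, since $H^\ast(M(\alpha)\colon\Z_{(2)})\cong H^\ast(M(\beta)\colon\Z_{(2)})$, I would invoke the $2$-local one-twist rigidity statement recorded immediately before this theorem---namely that isomorphic $\Z_{(2)}$-cohomology forces $M(\alpha)$ and $M(\beta)$ to be diffeomorphic. Chaining the equivalences, $M$ and $N$ are each homeomorphic to one of two diffeomorphic Bott manifolds, so $M$ and $N$ are homeomorphic, as desired.

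The single point requiring genuine care---and the step I expect to be the main obstacle---is the $2$-local rigidity input, i.e.\ the $\Z_{(2)}$-analogue of Theorem~\ref{theorem:rigidity of one-twist Bott tower}. Its proof over $\Z$ rests on two ingredients: the classification of automorphisms of $H^\ast((\CP^1)^{n-1})$ as permutations of the generators composed with sign changes, and Proposition~\ref{proposition:isomorphism of sum of two line bundle}. Over $\Z_{(2)}$ the unit group is larger than $\{\pm1\}$, so I would have to check that the automorphisms entering the argument still reduce, up to a self-diffeomorphism of $(\CP^1)^{n-1}$, to signed permutations, and that the decisive conditions---$\phi(\alpha)\equiv\beta\bmod 2$ together with $\phi(\alpha^2)=\beta^2$---retain their force. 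This is precisely where only parity information is used, and since reduction mod $2$ is well-defined on $\Z_{(2)}$ (whose maximal ideal is $2\Z_{(2)}$ with residue field $\Z/2$), the even/odd dichotomies driving the argument survive unchanged; this is exactly the content of the claim asserted before the theorem, and the remainder is then a formal assembly of the three $2$-local results.
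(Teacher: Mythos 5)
Your proposal matches the paper's own proof: the paper likewise assembles Theorem~\ref{thm:local2,BQ-algebra vs Bott tower} (to pass from the BQ-algebra hypothesis to Bott manifolds), Theorem~\ref{thm:twistnumber_local2} (to see the twist number is at most $1$), and the $\Z_{(2)}$-analogue of Theorem~\ref{theorem:rigidity of one-twist Bott tower}, which it states only as a claim in the sentence preceding the theorem, in exactly this order. Your closing discussion of the unit group of $\Z_{(2)}$ correctly isolates the one step the paper leaves unverified, so you are, if anything, more careful than the source.
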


In the proof of the cohomological rigidity of three-stage Bott manifolds in \cite{CMS}, Wall and Juppe's results on classification of simply connected $6$-dimensional manifolds is used essentially. However, recently, a different but direct proof of the cohomological rigidity of three-stage Bott manifolds is found, and a similar proof also works for the claim that two three-stage Bott manifolds with isomorphic $\Z_{(2)}$-cohomology rings are diffeomorphic. Therefore the same argument as above we have the following theorem.

\begin{theorem}\label{thm:3-stage_local2_version}
Let $M$ and $N$ be $6$-dimensional quasitoric manifolds whose $\Z_{(2)}$-cohomology rings are BQ-algebras over $\Z_{(2)}$. If $H^\ast(M\colon \Z_{(2)}) \cong H^\ast(N \colon \Z_{(2)})$ as graded rings, then $M$ and $N$ are diffeomorphic.
\end{theorem}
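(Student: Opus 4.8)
The plan is to run the three-step argument of Theorem~\ref{theorem:3 stage Bott manifold}, replacing each integral ingredient by its $\Z_{(2)}$-localized counterpart established earlier in this section. First I would apply Theorem~\ref{thm:local2,BQ-algebra vs Bott tower}: since $M$ and $N$ are $6$-dimensional quasitoric manifolds whose $\Z_{(2)}$-cohomology rings are BQ-algebras over $\Z_{(2)}$ (necessarily of rank $3$), the implication (3)$\Rightarrow$(1) there shows that $M$ and $N$ are each equivalent to a three-stage Bott manifold, say $B$ and $B'$ respectively. By the definition of equivalence these give (weak equivariant) homeomorphisms $M\approx B$ and $N\approx B'$, and in particular ring isomorphisms $H^\ast(M\colon\Z_{(2)})\cong H^\ast(B\colon\Z_{(2)})$ and $H^\ast(N\colon\Z_{(2)})\cong H^\ast(B'\colon\Z_{(2)})$.

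Next I would transport the hypothesized isomorphism along these. Chaining $H^\ast(B\colon\Z_{(2)})\cong H^\ast(M\colon\Z_{(2)})\cong H^\ast(N\colon\Z_{(2)})\cong H^\ast(B'\colon\Z_{(2)})$ exhibits the two three-stage Bott manifolds $B$ and $B'$ as having isomorphic $\Z_{(2)}$-cohomology rings. At this point I would invoke the direct $\Z_{(2)}$-cohomological rigidity of three-stage Bott manifolds recalled just before the statement, namely that three-stage Bott manifolds with isomorphic $\Z_{(2)}$-cohomology rings are diffeomorphic; this produces a diffeomorphism $B\cong B'$.

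It then remains to upgrade the homeomorphisms $M\approx B$ and $N\approx B'$ to diffeomorphisms. Since all quasitoric manifolds are simply connected, $M$, $N$, $B$, and $B'$ are closed simply connected smooth $6$-manifolds, and the classification of such manifolds by Wall \cite{Wa} and Jupp \cite{Ju} promotes these homeomorphisms to diffeomorphisms, exactly as in the proof of Theorem~\ref{theorem:3 stage Bott manifold}. Composing $M\cong B\cong B'\cong N$ then yields the desired diffeomorphism.

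The step I expect to be the genuine obstacle is the $\Z_{(2)}$-rigidity of three-stage Bott manifolds used in the second paragraph. Over $\Z$ this was the theorem of \cite{CMS}, whose proof passed through the Wall--Jupp classification, whereas over $\Z_{(2)}$ one must instead appeal to the recently found direct argument, which I am taking as given. Everything else is the formal bookkeeping of the localization, since Theorem~\ref{thm:local2,BQ-algebra vs Bott tower} already encapsulates the passage from a $\Z_{(2)}$-BQ-algebra to a Bott manifold.
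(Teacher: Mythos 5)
Your proposal matches the paper's own (deliberately brief) argument: apply Theorem~\ref{thm:local2,BQ-algebra vs Bott tower} to replace $M$ and $N$ by three-stage Bott manifolds, invoke the direct $\Z_{(2)}$-rigidity of three-stage Bott manifolds (which the paper also only cites as forthcoming, to ``be shown elsewhere''), and upgrade the homeomorphisms to diffeomorphisms via Wall and Jupp exactly as in Theorem~\ref{theorem:3 stage Bott manifold}. You also correctly isolate the one genuinely unproved ingredient, so there is nothing to add.
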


More precise argument for Theorem~\ref{thm:3-stage_local2_version} will be shown elsewhere.

\bigskip
\bibliographystyle{amsplain}

\begin{thebibliography}{10}

\bibitem{BS}
Raoul Bott and Hans Samelson, \emph{Applications of the theory of {M}orse to
  symmetric spaces}, Amer. J. Math. \textbf{80} (1958), 964--1029.
  \MR{MR0105694 (21 \#4430)}

\bibitem{BP}
Victor~M. Buchstaber and Taras~E. Panov, \emph{Torus actions and their
  applications in topology and combinatorics}, University Lecture Series,
  vol.~24, American Mathematical Society, Providence, RI, 2002.

\bibitem{ch-ma-su08}
Suyoung Choi, Mikiya Masuda, and Dong~Youp Suh, \emph{Quasitoric manifolds over
  a product of simplices}, Osaka J. Math. \textbf{47} (2010), 1--21.

\bibitem{CMS}
\bysame, \emph{Topological classification of generalized {B}ott towers}, Trans. Amer. Math. Soc. \textbf{362} (2010), no.~2, 1097--1112.

\bibitem{CPS}
Suyoung Choi, Taras~E Panov, and Dong~Youp Suh, \emph{Toric cohomological rigidity of simple convex polytopes}, preprint, arXiv:0807.4800[math.AT](2008).

\bibitem{DJ}
Michael~W. Davis and Tadeusz Januszkiewicz, \emph{Convex polytopes, {C}oxeter
  orbifolds and torus actions}, Duke Math. J. \textbf{62} (1991), no.~2,
  417--451.

\bibitem{GK}
Michael Grossberg and Yael Karshon, \emph{Bott towers, complete integrability,
  and the extended character of representations}, Duke Math. J. \textbf{76}
  (1994), no.~1, 23--58. \MR{MR1301185 (96i:22030)}

\bibitem{Ju}
P.~E.~Jupp, \emph{Classification of certain $6$-manifolds},
Math. Proc. Camb. Phil. Soc. \textbf{73} (1973) 293--300.

\bibitem{KM}
Yoshinobu Kamishima and Mikiya Masuda, \emph{Cohomological rigidity of real
  {B}ott manifolds}, arXiv:0807.4263 (2008).

\bibitem{Ma1}
Mikiya Masuda, \emph{Classification of real {B}ott manifolds}, arXiv:0809.2178
  (2008).

\bibitem{Ma2}
\bysame, \emph{Cohomological non-rigidity of generalized real {B}ott manifolds of
  height 2}, arXiv:0809.2215 (2008).

\bibitem{MP}
Mikiya Masuda and Taras~E. Panov, \emph{Semifree circle actions, {B}ott towers,
  and quasitoric manifolds}, Sbornik Math. \textbf{199} (2008), no.~8,
  1201--1223.

\bibitem{Wa}
C.~T.~C. Wall,
\emph{Classification problem in differential topology. V :
On certain $6$-manifolds}, Invent. Math. \textbf{1} (1966), 355--374.



\end{thebibliography}

\end{document}